    \theoremstyle{plain}
\newtheorem{theorem}{Theorem}[section]
\newtheorem{proposition}{Proposition}[section]
\newtheorem{corollary}{Corollary}[section]
\newtheorem{lemma}{Lemma}[section]
\theoremstyle{remark}
\newtheorem{remark}{Remark}[section]
\newtheorem{examples}{Examples}[section]
\newtheorem{assumption}{Assumption}[section]
\numberwithin{equation}{section}
\DeclareMathOperator{\supp}{supp}
\DeclareMathOperator{\diverg}{div}
\DeclareMathOperator{\loc}{loc}
\begin{document}

\title[Self-adjoint Laplacians on hyperbolic attractors]{Self-adjoint Laplacians and symmetric diffusions on hyperbolic attractors}

\author{Shayan Alikhanloo$^1$, Michael Hinz$^2$}
\thanks{$^1$, $^2$ Research supported by the DFG IRTG 2235: 'Searching for the regular in the irregular: Analysis of singular and random systems'.}
\address{$^1$Fakult\"at f\"ur Mathematik, Universit\"at Bielefeld, Postfach 100131, 33501 Bielefeld, Germany}
\email{salikhan@math.uni-bielefeld.de}
\address{$^2$Fakult\"at f\"ur Mathematik, Universit\"at Bielefeld, Postfach 100131, 33501 Bielefeld, Germany}
\email{mhinz@math.uni-bielefeld.de}

\begin{abstract} 
We construct self-adjoint Laplacians and symmetric Markov semigroups on hyperbolic attractors, endowed with Gibbs $u$-measures. If the measure has full support, we can also conclude the existence of an associated symmetric diffusion process. In the special case of partially hyperbolic diffeomorphisms induced by geodesic flows on negatively curved manifolds the Laplacians we consider are self-adjoint extensions of well-known classical leafwise Laplacians. We observe a quasi-invariance property of energy densities in the $u$-conformal case and the existence of nonconstant functions of zero energy.
\tableofcontents
\end{abstract}

\keywords{Hyperbolic attractors, Gibbs $u$-measures, SRB measures, Dirichlet forms, self-adjoint operators, semigroups, diffusions}
\subjclass[2010]{31C25, 37D10, 37D20, 37D25, 37D30, 37D35, 47A07, 47B25, 47D07, 60J60}

\maketitle

\section{Introduction}

We consider partially hyperbolic attractors, \cite{HaPe,Barreira, BarreiraPesin, BrinStuck, Pesin,Pesin1, Robinson}, hyperbolic attractors with singularities, \cite{CLP17, Pesin92}, and attractors with nonuniformly hyperbolic structure, \cite{BarreiraPesin, BarreiraPesin2}, endowed with Gibbs $u$-measures, \cite{CLP17, LedrappierYoung1, Pesin, PesinSinai, Young}. We construct Laplacians and diffusions that act respectively move leafwise in the unstable directions and are self-adjoint respectively symmetric with respect to the given Gibbs $u$-measure. 

On smooth manifolds, graphs and certain groups and metric measure spaces symmetric diffusions and self-adjoint Laplacians are well-understood and provide additional insight into structural features of the space. In terms of geometric complexity, attractors of dynamical systems are more involved than manifolds, but they still display more features of smoothness than fractals. Analysis on manifolds and on certain classes of fractals is well-developed (see for instance \cite{Da89, Grigoryanbook, Kigami01} and the references cited there), but an analysis on hyperbolic attractors is yet to be explored. On the other hand hyperbolic attractors have natural tangential structures, unambigously determined by the stable manifold theorem, \cite{BarreiraPesin, BrinStuck, Robinson, Shub}, and they carry distinguished volume measures that capture the dynamics of the system in an optimal way, namely Gibbs $u$-measures, and in particular, SRB measures. See \cite{Bowen74, Bowen75, CDP16, CLP17, Ledrappier, LedrappierYoung1, PesinSinai, Ruelle76, Sinai68, Sinai72, Young}.

Expanding hyperbolic attractors in Williams' sense, \cite{Williams}, are, roughly speaking, topologically conjugate to abstract solenoids, see Remark \ref{R:expanding} below. These abstract solenoids are foliated spaces in the sense of \cite{Candel03, CandelConlonI, CandelConlonII}, and the analysis developed there can be applied. Alternatively, they can be studied from group theoretic point of view, cf. \cite[(10.12) Definition and (10.13) Theorem]{HewittRoss}, the related harmonic analysis is classical. Recent results on analysis on inverse limit spaces can be found in \cite{AlonsoRuiz18, AlonsoRuiz21, Steinhurst}. A theory of metric spaces with an attractor-like structure had been established in \cite{Putnam}. The existence results for self-adjoint Laplacians and symmetric diffusions we provide here are new, and they are of a different type: Since they are based on the stable manifold theorem and the concept of Gibbs $u$-measures, they are results in the differentiable category rather than in the topological. Somewhat related studies of symmetric diffusions on pattern spaces, fractals invariant under Kleinian groups and on certain repellers (Julia sets) can be found in \cite{ARHTT, Kajino20a, Kajino20b} respectively \cite{RogersTeplyaev10}.

We use a Dirichlet form approach, \cite{BH91, Da89, FOT94}, which seems well-adapted: If the given diffeomorphism is $C^{1+\alpha}$,  the local unstable manifolds are $C^1$, so that one cannot talk about $C^2$-functions in leafwise direction. Even if the diffeomorphism is $C^{r+\alpha}$ with $r\geq 2$, the densities of the conditional measures on the local unstable manifolds are typically H\"older, a regularity too low to introduce classical Laplacians. On the other hand the $C^1$-regularity of the unstable manifolds allows to define quadratic forms, which then give rise to well-defined operators. The situation is similar to that encountered for second order elliptic operators with bounded measurable coefficients in divergence form, which are typically studied using Dirichlet forms. 

In Section \ref{S:Laplacians} we introduce an abstract setup, general enough to accommodate partially hyperbolic attractors and hyperbolic attractors with singularities; the basic structure is fixed in Assumption \ref{A:A1}. Within this general setup we can use leafwise gradients, defined in the classical sense, to introduce quadratic forms of Dirichlet integral type associated with a given finite Borel measure. Under suitable measurability conditions on the tangential structure and absolute continuity conditions on conditional measures (Assumptions \ref{A:A2} and \ref{A:A3}) this quadratic form extends to a local Dirichlet form, and we may regard its generator as a self-adjoint Laplacian on the space, Theorem \ref{T:closable}. Related first order quantities extend accordingly, Corollary \ref{C:firstorder}, and the unique symmetric Markov semigroup corresponding to the Dirichlet form is recurrent and conservative, Corollary \ref{C:semigroup}. Under additional assumptions the Dirichlet form is strongly local and there is a symmetric Hunt diffusion process uniquely associated with it, Theorem \ref{T:regular}. In Section \ref{S:hyperbolic} we first collect well-known facts on partially and uniformly hyperbolic attractors and Gibbs $u$- (and SRB) measures and then observe that the results from Section \ref{S:Laplacians} apply and yield natural self-adjoint Laplacians and symmetric diffusions on these attractors, Theorem \ref{T:pha}. In Section \ref{S:geodesicflow} we discuss the case of geodesic flows on negatively curved manifolds, \cite{Anosov67, BarreiraPesin, BrinStuck, BG05, FH19, H17, KatokHasselblatt, Poll93}. In this special case our results recover a well-known construction of leafwise Laplacians and diffusions, see for instance \cite{Yue, Yue95}, and they may be viewed as an extension of this construction to the attractors of dissipative systems. In Section \ref{S:quasiinv} we consider the $u$-conformal case and observe an interesting quasi-invariance property for the energy densities, Theorem \ref{T:quasiinv}, it contrasts the Cameron-Martin theorem, see Remark \ref{R:CM}. A result on the existence of nonconstant functions of zero energy is provided by Theorem \ref{T:noLiouville} in Section \ref{S:zero}, it roughly speaking says that in general Liouville or foliated Liouville properties do not hold. Our method also yields self-adjoint Laplacians on hyperbolic attractors with singularities and on attractors with nonuniformly hyperbolic structure, examples are attractors of Lorenz, Lozi or Belykh type, see \cite{Belykh, Levy, Lozi, Misiurewicz, Pesin92, Sataev99}, and the H\'enon attractor, \cite{BarreiraPesin2, BeCa91, BeY93, Henon}. This is shown in Sections \ref{S:has} and \ref{S:nonuni}. To make the article as self-contained as possible, we survey basic facts on Rokhlin's theorem in Appendix \ref{S:Rokhlin}, provide a brief sketch of the geometric approach to Gibbs $u$-measures in Appendix \ref{S:SRB},  discuss weighted manifolds of low regularity in Appendix \ref{S:manifolds} and sketch a known superposition argument for quadratic forms in Appendix \ref{S:superpos}.

\section*{Acknowledgements}

The authors wish to thank Vaughn Climenhaga, Steven Frankel, Alexander Grigoryan, Marc Pollicott and Alexander Teplyaev for helpful discussions and comments.

\section{Dirichlet forms and self-adjoint Laplacians}\label{S:Laplacians}

We consider an abstract setup and introduce quadratic forms that are analogs of the classical Dirichlet integral and extend to local Dirichlet forms. 

Let $M$ be a smooth Riemannian manifold and $A\subset M$ a Borel set. We make several structural assumptions on $A$.

\begin{assumption}\label{A:A1}
We assume that $r\geq 1$ and that $A$ admits a partition into immersed submanifolds of $M$ of class $C^r$. Given $x\in A$, we write $W(x)$ for the partition element containing $x$. Along with $x$, $W(x)$ contains a set $B_x$ which is open in $W(x)$ and locally compact in the topology on $A$ induced by that of $M$.
\end{assumption}


We note for later use that under Assumption \ref{A:A1} the set $A$ is locally compact and separable in the topology induced by that of $M$.

By $C(A)$ we denote the space of continuous functions on $A$ (with respect to the subspace topology), by $C(M)|_{A}$ and 
$C^k(M)|_{A}$, $1\leq k\leq r$, we denote the spaces of restrictions to $A$ of functions from $C(A)$ and $C^k(M)$, respectively.
If $A$ is closed, then $C(M)|_{A}=C(A)$ by Tietze's extension theorem. We write $C^u(A)$ (resp. $C^{u,k}(A)$) for the space of Borel functions $\varphi:A\rightarrow \mathbb{R}$ whose restriction to any $B_x$, $x\in A$, is a continuous (resp. $C^k$-) function on $B_x$. Clearly $C^{k+1}(M)|_{A}\subset C^{k}(M)|_{A}$ and $C^{u,k+1}(A)\subset C^{u,k}(A)$, and the spaces form algebras under pointwise multiplication. 
\begin{proposition}\label{P:naive}
For any $1\leq k\leq r$ we have $C^k(M)|_{A}\subset C^{u,k}(A)$, in particular, $C(A)\subset C^{u}(A)$.
\end{proposition}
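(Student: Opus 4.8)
The claim is that restricting a $C^k(M)$-function to $\Lambda$ yields a function whose restriction to each local unstable manifold $W_\varepsilon^u(x)$ lies in $C^k(W_\varepsilon^u(x))$. The plan is to use the fact, recorded in Theorem~\ref{T:SMT}(i), that each $W_\varepsilon^u(x)$ is a $C^r$-embedded submanifold of $M$, and that composition of a $C^k$ function on the ambient manifold with a $C^r$ embedding (with $k\le r$) is again $C^k$.

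First I would fix $g\in C^k(M)$ and $x\in\Lambda$, and write $\varphi:=g|_\Lambda$. Since $W_\varepsilon^u(x)$ is a $C^r$-embedded submanifold of $M$ by Theorem~\ref{T:SMT}(i), the inclusion map $\iota:W_\varepsilon^u(x)\hookrightarrow M$ is a $C^r$ map between smooth manifolds. Then $\varphi|_{W_\varepsilon^u(x)}=g\circ\iota$, being the composition of the $C^k$ function $g$ with the $C^r$ (hence $C^k$, as $k\le r$) map $\iota$, is of class $C^k$ on $W_\varepsilon^u(x)$. This can be checked in local charts: around any point $q\in W_\varepsilon^u(x)$ one chooses a submanifold chart for $M$ adapted to $W_\varepsilon^u(x)$, in which $\iota$ becomes the inclusion of a coordinate subspace, and $g$ expressed in these coordinates is $C^k$, so its restriction to the subspace is $C^k$. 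Since $x\in\Lambda$ was arbitrary, $\varphi\in C^{u,k}(\Lambda)$, giving $C^k(M)|_\Lambda\subset C^{u,k}(\Lambda)$.

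The special case $k=0$: every $\varphi\in C(\Lambda)$ is, by Tietze's extension theorem (as already noted in the text, $C^0(M)|_\Lambda=C(\Lambda)$), the restriction of some $g\in C(M)$, and continuity of $g$ on $M$ restricts to continuity of $\varphi|_{W_\varepsilon^u(x)}$ for every $x$; alternatively this is immediate since $W_\varepsilon^u(x)$ carries the subspace topology from $M$ and $\varphi$ is already continuous on all of $\Lambda\supset W_\varepsilon^u(x)$. Hence $C(\Lambda)\subset C^u(\Lambda)$.

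I do not expect a serious obstacle here; the statement is essentially a bookkeeping consequence of the stable manifold theorem. The only point requiring a little care is making sure the embedding is genuinely $C^r$ as a map of abstract manifolds (not merely that the image is a topological submanifold), so that the chain rule / chart computation for the composition is legitimate — but this is exactly what Theorem~\ref{T:SMT}(i) provides. One should also note that the conclusion is stated for the local unstable manifolds $W_\varepsilon^u(x)$; the analogous statement for the global unstable manifolds $W^u(x)$ then follows because $W^u(x)$ is a $C^r$-injectively immersed submanifold (as recalled after~(\ref{E:localtoglobal})) and the same composition argument applies, which is the remark made in the paragraph preceding the proposition.
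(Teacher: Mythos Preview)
Your argument is correct and follows essentially the same route as the paper: both rely on Theorem~\ref{T:SMT}(i) to ensure that $W_\varepsilon^u(x)$ is a $C^r$-embedded submanifold, and then invoke the standard fact that restriction of a $C^k$ function on $M$ to such a submanifold is $C^k$ (the paper cites \cite[Theorem 5.27]{Lee} for this, while you spell out the composition-with-inclusion reasoning). Your additional remarks on the $k=0$ case and on global unstable manifolds are fine but not needed for the proposition as stated.
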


\begin{proof}
For any $x\in A$ the pointwise restriction of a function $g\in C^k(M)$ is in $C^k(W(x))$  because $W(x)$ is a $C^r$ immersed submanifold, \cite[Theorem 5.27]{Lee}.
\end{proof}

Let each of the immersed submanifolds $W(x)$ be endowed with the Riemannian metric inherited from $M$. We write $P_{T_xW(x)}$ to denote the orthogonal projection in $T_xM$ onto $T_xW(x)$. By $\nabla_M$ and $\nabla_{W(x)}$ we denote the gradient operators on $M$ and on $W(x)$, respectively. Assumption \ref{A:A1} ensures that for any $x\in A$ and any $\varphi\in C^{u,1}(A)$ the gradient $\nabla_{W(x)}\varphi$ of $\varphi$ is well-defined on $B_x\subset A\cap W(x)$. 

\begin{proposition}\label{P:restrict}
For any function $g\in C^1(M)$ and any $x\in A$ we have 
\[\nabla_{W(x)} g|_{A}(x)=P_{T_xW(x)}(\nabla_M g(x))\] 
in $T_xW(x)$ and 
\begin{equation*}\label{E:gradientineq}
\left\|\nabla_{W(x)} g|_{A}(x)\right\|_{T_xW(x)}\leq \left\|\nabla_M g(x)\right\|_{T_xM}.
\end{equation*}
\end{proposition}

\begin{proof}
By definition the gradient $\nabla_{W(x)} g|_{A}(x)$ on $W(x)$ of $g|_{A}$ at $x$ is the unique element of $T_xW(x)$ such that 
$v(g|_{A})=\big\langle \nabla_{W(x)} g|_{A}(x), v\big\rangle_{T_xW(x)}$
for all $v\in T_xW(x)$. For any such $v$ we can find an open interval $I\subset\mathbb{R}$ around zero and a $C^1$ curve $\gamma:I\to W(x)\subset M$ with $\gamma(0)=x$ and $\dot{\gamma}(0)=v$, and we have  
\[v(g|_{A})=\frac{d}{dt}g(\gamma(t))|_{t=0}=v(g).\]
For the gradient $\nabla_M g(x)$ on $M$ of $g$ at $x$ we have, again by definition, 
\begin{multline}
v(g)=\big\langle\nabla_M g(x),v\big\rangle_{T_xM}=\big\langle P_{T_xW(x)}(\nabla_M g(x)),v\big\rangle_{T_xM}\notag\\
=\big\langle P_{T_xW(x)}(\nabla_M g(x)),v\big\rangle_{T_xW(x)},
\end{multline}
because $v\in T_xW(x)$. Since $v$ was arbitrary, this implies the statement.
\end{proof}
 
Assumption \ref{A:A1} allows to introduce a 'classical' leafwise gradient operator $\nabla$ on $C^{u,1}(A)$ by 
\begin{equation}\label{E:classgrad}
\nabla \varphi(x):=\nabla_{W(x)}\varphi(x), \quad x\in A,\quad \varphi\in C^{u,1}(A).
\end{equation}

Let $\mu$ be a finite Borel measure on $A$. We define a quadratic form $(\mathcal{E},\overline{\mathcal{D}}_0(\mathcal{E}))$ by
\begin{equation}\label{E:initialdomain}
\overline{\mathcal{D}}_0(\mathcal{E}):=\left\lbrace \varphi\in L^2(A,\mu)\cap C^{u,1}(A):\ x\mapsto \left\|\nabla \varphi(x)\right\|_{T_xW(x)}\ \text{is in $L^2(A,\mu)$}\right\rbrace
\end{equation}
and 
\begin{equation}\label{E:defquadform}
\mathcal{E}(\varphi):=\int_{A} \left\|\nabla\varphi (x)\right\|_{T_xW(x)}^2\:\mu(dx),\quad \varphi\in \overline{\mathcal{D}}_0(\mathcal{E}).
\end{equation}

The form $\mathcal{E}$ is an analog of the classical Dirichlet integral. By polarization it may be seen as a nonnegative definite symmetric bilinear form on $\overline{\mathcal{D}}_0(\mathcal{E})$.

\begin{remark}
Obviously $(\mathcal{E},\overline{\mathcal{D}}_0(\mathcal{E}))$ depends on the choice of $\mu$ and should rather be denoted by $(\mathcal{E}^{(\mu)},\overline{\mathcal{D}}_0(\mathcal{E}^{(\mu)}))$. However, since $\mu$ will be fixed, we omit it from notation, and we will follow the same agreement for other items.
\end{remark}

We also use the notation 
\begin{equation}\label{E:secdom}
\mathcal{D}_0(\mathcal{E}):=\overline{\mathcal{D}}_0(\mathcal{E})\cap C_c(A).
\end{equation}

\begin{remark} 
Different choices of a priori domains (\ref{E:initialdomain}) lead to different forms and Laplacians below. We concentrate on (\ref{E:initialdomain}), because it is large from a 'transversal' point of view, and on (\ref{E:secdom}), because it is well-connected to the topology of $A$ and one can use the theory in \cite{FOT94}.
\end{remark}

\begin{assumption}\label{A:A2}
We  assume that for any  continuous vector field $v$ on $M$ the map $x\mapsto \left\|P_{T_xW(x)}v(x)\right\|_{T_xM}$
is Borel measurable on $A$. 
\end{assumption}
Then by Proposition \ref{P:restrict} we have $C^{1}_c(M)|_{A}\subset \mathcal{D}_0(\mathcal{E})$. Viewing $\mu$ as a measure on $M$, also the following density statement becomes obvious. 

\begin{proposition}\label{P:dense}
For any finite Borel measure $\mu$ on $A$ and any nonnegative integer $k\leq r$ the space $C^k_c(M)|_{A}$ is a dense subspace of $L^2(A,\mu)$.
\end{proposition}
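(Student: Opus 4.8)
The goal is to prove that $C^k(M)|_\Lambda$ is dense in $L^2(\Lambda,\mu)$ for any finite Borel measure $\mu$ on $\Lambda$ and any nonnegative integer $k\leq r$. Since $\mu$ is a finite Borel measure on the compact metric space $\Lambda$, it is in particular a regular Radon measure, and therefore $C(\Lambda)$ is dense in $L^2(\Lambda,\mu)$ — this is a standard fact following from Lusin's theorem (or, alternatively, from the density of simple functions together with regularity of $\mu$, which allows each indicator $\mathbbm{1}_B$ to be approximated in $L^2$ by continuous functions via Urysohn). So the first step is to recall or cite this: $\overline{C(\Lambda)}^{L^2(\Lambda,\mu)} = L^2(\Lambda,\mu)$.

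The second step is to pass from $C(\Lambda)$ to $C^k(M)|_\Lambda$. By Corollary \ref{C:dense}, $C^k(M)|_\Lambda$ is dense in $C(\Lambda)$ with respect to the supremum norm $\|\cdot\|_{\sup,\Lambda}$. Since $\mu$ is finite, $\|\cdot\|_{L^2(\Lambda,\mu)} \leq \mu(\Lambda)^{1/2}\|\cdot\|_{\sup,\Lambda}$ on $C(\Lambda)$, so uniform convergence implies $L^2(\mu)$-convergence. Hence $C^k(M)|_\Lambda$ is dense in $C(\Lambda)$ also in the $L^2(\Lambda,\mu)$-norm. Combining the two density statements (and using that $C^k(M)|_\Lambda \subset C(\Lambda)$ by Proposition \ref{P:naive}, since $C(\Lambda)=C^0(M)|_\Lambda$ by Tietze and $C^k(M)\subset C^0(M)$) gives the claim, because density is transitive: a dense subspace of a dense subspace (in a fixed normed space) is dense.

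**Main obstacle.** There is essentially no serious obstacle here; this is a soft functional-analytic statement. The only point requiring a moment's care is the density of $C(\Lambda)$ in $L^2(\Lambda,\mu)$, which relies on the regularity of finite Borel measures on a compact (hence Polish, locally compact Hausdorff) space — a completely standard fact, but one that should be invoked explicitly rather than taken for granted. Everything else reduces to the transitivity of density and the elementary norm comparison $\|\cdot\|_{L^2(\mu)} \leq \mu(\Lambda)^{1/2}\|\cdot\|_{\sup,\Lambda}$ on $C(\Lambda)$, both of which are immediate. I would therefore keep the proof to a few lines: cite regularity of $\mu$ for $\overline{C(\Lambda)} = L^2(\mu)$, cite Corollary \ref{C:dense} and the norm bound for $\overline{C^k(M)|_\Lambda}^{\sup} \supseteq C(\Lambda)$, and conclude.
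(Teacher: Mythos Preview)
Your proposal is correct and takes essentially the same approach as the paper: both arguments combine the standard density of $C(\Lambda)$ in $L^2(\Lambda,\mu)$ (via Lusin/regularity and Tietze) with uniform approximation of continuous functions by restrictions of $C^k(M)$-functions, passing to $L^2$ via the bound $\|\cdot\|_{L^2(\mu)}\leq \mu(\Lambda)^{1/2}\|\cdot\|_{\sup,\Lambda}$. The only difference is packaging---the paper spells out the Lusin/Tietze step explicitly and performs the $C^k$-approximation by hand, whereas you invoke the standard density fact and Corollary~\ref{C:dense}; both are fine.
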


We call a Borel subset $\mathcal{R}\subset A$ an \emph{abstract rectangle} if it has positive measure $\mu(\mathcal{R})>0$ and admits a measurable partition into open subsets $B$ of the submanifolds $W(x)$, $x\in A$. 

\begin{assumption}\label{A:A3}
We assume that $A=\bigcup_{\ell\geq 1} A_\ell$ with compact subsets $A_\ell$ of $M$ such that $A_1\subset A_2\subset \dots$ and that for any $\ell\geq 1$ the set $A_\ell$ can be covered by finitely many abstract rectangles $\mathcal{R}_{\ell,j}\subset A_\ell$, $j=1,...,n_\ell$.
\end{assumption}

On each abstract rectangle $\mathcal{R}_{\ell,j}$ the disintegration identity
\begin{equation}\label{E:disintegration}
\mu(E)=\int_{\mathcal{P}_{\ell,j}} \, \mu_{B}(E)\, \hat{\mu}_{\mathcal{P}_{\ell,j}}(dB), \quad \text{$E\subset \mathcal{R}_{\ell,j}$ Borel,}
\end{equation}
holds, where $\hat{\mu}_{\mathcal{P}_{\ell,j}}$ is the pushforward of $\mu$ under the canonical projection onto the elements $B$ of the partition $\mathcal{P}_{\ell,j}$ and for each $B\in \mathcal{P}_{\ell,j}$ the symbol $\mu_B$ denotes the conditional measure on $B$. See Appendix \ref{S:Rokhlin}.

We write $m_{W(x)}$ for the Riemannian volume on $W(x)$. Given $\ell$, $j$ and $B\in \mathcal{P}_{\ell,j}$ we denote the restriction to $B$ of $m_{W(x)}$ by $m_B$. We say that $\mu$ satisfies the\emph{ (AC)-property} if for any abstract rectangle $\mathcal{R}_{\ell,j}$ the conditional measures $\mu_{B}$ in (\ref{E:disintegration}) are absolutely continuous with respect to $m_{B}$ for $\hat{\mu}_{\mathcal{P}_{\ell,j}}$-a.e. $B$. By $\mathcal{M}^{ac}(A)$ we denote  the set of all Borel probability measures $\mu$ on $A$ satisfying the (AC)-property and having bounded, strictly positive and continuous Radon-Nikodym densities $d\mu_B/dm_B$.

We extend $\mathcal{E}$ to a standard functional analysis setup, cf. \cite[Section VIII.6]{RS80}. Recall that a pair $(\mathcal{Q},\mathcal{D}(\mathcal{Q}))$ is said to be a \emph{densely defined nonnegative definite symmetric bilinear form} on the Hilbert space $L^2(A,\mu)$ if $\mathcal{D}(\mathcal{Q})$ is a dense subspace of $L^2(A,\mu)$ and $\mathcal{Q}$ is a nonnegative definite symmetric bilinear form on $\mathcal{D}(\mathcal{Q})$. If in addition $\mathcal{D}(\mathcal{Q})$, endowed with the scalar product $(\varphi,\psi)\mapsto \mathcal{Q}(\varphi,\psi)+\left\langle \varphi,\psi\right\rangle_{L^2(A,\mu)}$, is a Hilbert space, then we call $(\mathcal{Q},\mathcal{D}(\mathcal{Q}))$ a \emph{closed quadratic form}. To each closed quadratic form $(\mathcal{Q},\mathcal{D}(\mathcal{Q}))$ on $L^2(A,\mu)$ there corresponds a unique non-positive definite self-adjoint operator $(\mathcal{T},\mathcal{D}(\mathcal{T}))$ on $L^2(A,\mu)$ such that $\mathcal{Q}(\varphi,\psi)=-\left\langle \mathcal{T}\varphi,\psi\right\rangle_{L^2(A,\mu)}$ for all $\varphi\in \mathcal{D}(\mathcal{T})$ and $\psi\in \mathcal{D}(\mathcal{Q})$; it is called the \emph{generator} of $(\mathcal{Q},\mathcal{D}(\mathcal{Q}))$. See \cite[Theorem 1.3.1]{FOT94} or \cite[Theorem VIII.15]{RS80}. If $\mathcal{D}_0(\mathcal{Q})$ is a dense subspace of $L^2(A,\mu)$ and $\mathcal{Q}$ is a nonnegative definite symmetric bilinear form on $\mathcal{D}_0(\mathcal{Q})$, then $(\mathcal{Q}, \mathcal{D}_0(\mathcal{Q}))$ is said to be \emph{closable} if there is a closed quadratic form $(\mathcal{Q}',\mathcal{D}(\mathcal{Q}'))$ such that $\mathcal{D}_0(\mathcal{Q})\subset \mathcal{D}(\mathcal{Q}')$ and $\mathcal{Q}'=\mathcal{Q}$ on $\mathcal{D}_0(\mathcal{Q})$. This is the case if for any sequence $(\varphi_j)_j\subset \mathcal{D}_0(\mathcal{Q})$ that is Cauchy with respect to the seminorm $\mathcal{Q}^{1/2}$ and such that $\lim_j \varphi_j=0$ in $L^2(A,\mu)$ we have $\lim_j \mathcal{Q}(\varphi_j)=0$. A \emph{Dirichlet form} $(\mathcal{Q},\mathcal{D}(\mathcal{Q}))$ on $L^2(A,\mu)$ is a closed quadratic form satisfying the Markov property, i.e. such that $u\in\mathcal{D}(\mathcal{Q})$ implies $u\wedge 1 \in \mathcal{D}(\mathcal{Q})$ and $\mathcal{Q}(u\wedge 1)\leq \mathcal{Q}(u)$, see for instance \cite[Chapter I, 1.1.1 and 3.3.1]{BH91} or \cite{Da89, FOT94}. If $\mathbf{1}\in\mathcal{D}(\mathcal{Q})$ then the Dirichlet form 
$(\mathcal{Q},\mathcal{D}(\mathcal{Q}))$ is said to be \emph{local} if for any $F,G\in C^\infty(\mathbb{R})$ with disjoint supports and any $u\in\mathcal{D}(\mathcal{Q})$ we have $\mathcal{Q}(F(u),G(u))=0$, \cite[Chapter I, Corollary 5.1.4]{BH91}.

The following is the abstract version of our existence result for Laplacians.

\begin{theorem}\label{T:closable}
Suppose that Assumptions \ref{A:A1}, \ref{A:A2} and \ref{A:A3} hold and $\mu\in \mathcal{M}^{ac}(A)$.
\begin{enumerate}
\item[(i)] The quadratic form $(\mathcal{E},\overline{\mathcal{D}}_0(\mathcal{E}))$ on $L^2(A,\mu)$ is closable, and its closure $(\mathcal{E}, \overline{\mathcal{D}}(\mathcal{E}))$ is a Dirichlet form. We have $\mathbf{1}\in \overline{\mathcal{D}}(\mathcal{E})$, and $(\mathcal{E}, \overline{\mathcal{D}}(\mathcal{E}))$ is local.
\item[(ii)] Its generator $(\overline{\mathcal{L}},\mathcal{D}(\overline{\mathcal{L}}))$ is a non-positive definite self-adjoint operator on $L^2(A,\mu)$. We have $\int_{A} \overline{\mathcal{L}}u\:\varphi\:d\mu=-\mathcal{E}(u,\varphi)$ for all $u\in \mathcal{D}(\overline{\mathcal{L}})$ and $\varphi\in \overline{\mathcal{D}}(\mathcal{E})$. In particular, 
\begin{equation}\label{E:harmonic}
\int_{A} \overline{\mathcal{L}}u\:d\mu=0,\quad u\in \mathcal{D}(\overline{\mathcal{L}}),
\end{equation}
and the bottom of the spectrum of $(-\mathcal{L},\mathcal{D}(\mathcal{L}))$ is zero.
\item[(iii)] Corresponding statements are true for the quadratic form $(\mathcal{E},\mathcal{D}_0(\mathcal{E}))$, its closure $(\mathcal{E}, \mathcal{D}(\mathcal{E}))$ and the generator $(\mathcal{L},\mathcal{D}(\mathcal{L}))$ of the latter. 
\end{enumerate}
\end{theorem}

\begin{remark} 
The Dirichlet form $(\mathcal{E}, \overline{\mathcal{D}}(\mathcal{E}))$ is an extension of $(\mathcal{E}, \mathcal{D}(\mathcal{E}))$.
\end{remark}

The key fact to prove Theorem \ref{T:closable} is the following consequence of (\ref{E:disintegration}).

\begin{lemma}\label{L:closable}
Suppose that Assumptions \ref{A:A1}, \ref{A:A2} and \ref{A:A3} hold and $\mu\in \mathcal{M}^{ac}(A)$. Then $(\mathcal{E}, \overline{\mathcal{D}}_0(\mathcal{E}))$ is closable on $L^2(A,\mu)$.
\end{lemma}

\begin{proof}
Suppose that $(\varphi_n)_{n=1}^\infty\subset \overline{D}_0(\mathcal{E})$ is Cauchy w.r.t. the seminorm $\mathcal{E}^{1/2}$ and such that $\lim_n \left\|\varphi_n\right\|_{L^2(A,\mu)}=0$.  Let $\varepsilon>0$. Choose $n_\varepsilon\geq 1$ large enough so that $\mathcal{E}(\varphi_n-\varphi_m)^{1/2}<\varepsilon/2$ for all $n,m\geq n_\varepsilon$ and choose $\ell\geq 1$ large enough such that 
\[\left(\int_{A\setminus A_\ell}\left\|\nabla \varphi_{n_\varepsilon}(x)\right\|_{T_xW(x)}^2\mu(dx)\right)^{1/2}<\frac{\varepsilon}{2}.\]
Then by the triangle inequality we have 
\begin{equation}\label{E:smallerthaneps}
\sup_{n\geq n_\varepsilon}\left(\int_{A\setminus A_\ell} \left\|\nabla \varphi_{n}(x)\right\|_{T_xW(x)}^2\mu(dx)\right)^{1/2}<\varepsilon.
\end{equation}
We claim that 
\begin{equation}\label{E:claim}
\lim_n \int_{A_\ell}\left\|\nabla \varphi_{n}(x)\right\|_{T_xW(x)}^2\mu(dx)=0.
\end{equation}
If so, then in combination with (\ref{E:smallerthaneps}) we obtain $\lim_n \mathcal{E}(\varphi_n)<\varepsilon$, and since $\varepsilon$ was arbitrary, this shows the closability of $(\mathcal{E}, \overline{\mathcal{D}}_0(\mathcal{E}))$. 

To verify (\ref{E:claim}) let $\mathcal{R}_{\ell,1},...,\mathcal{R}_{\ell,n_\ell}$ be a finite cover of $A_\ell$ by abstract rectangles $\mathcal{R}_{\ell,j}$. For each $j$ the quantity
\begin{multline}\label{E:makesmall}
\int_{\mathcal{P}_{\ell,j}}\int_B \left\|\nabla_{W(\zeta)}(\varphi_n-\varphi_m)(\zeta)\right\|_{T_\zeta W(\zeta)}^2\mu_B(d\zeta)\hat{\mu}_{\mathcal{P}_{\ell,j}}(dB)\\
=\int_{\mathcal{R}_{\ell,j}}\left\|\nabla_{W(x)}(\varphi_n-\varphi_m)(x)\right\|_{T_x W(x)}^2\mu(dx)
\end{multline}
can be made arbitrarily small if $n$ and $m$ are chosen large enough. Here we have used (\ref{E:disintegration}). Clearly also 
\begin{equation}\label{E:gotozero}
\lim_n \int_{\mathcal{P}_{\ell,j}}\int_B (\varphi_n(\zeta))^2\mu_B(d\zeta)\hat{\mu}_{\mathcal{P}_{\ell,j}}(dB)
=\lim_n\int_{\mathcal{R}_{\ell,j}} (\varphi_n(x))^2\mu(dx)=0.
\end{equation}
Each $B\in \mathcal{P}_{\ell,j}$ is an open subset of some Riemannian manifold $W=W(x)$, hence itself a Riemannian manifold. By Proposition \ref{P:closed} the Dirichlet integral 
\begin{equation}\label{E:DirichletonB}
D^{(\mu_B)}(\psi)=\int_B \left\|\nabla_W \psi(\zeta)\right\|_{T_\zeta W}^2\mu_B(d\zeta)
\end{equation}
on $B$ with domain $C^1(B)\cap L^2(B,\mu_B)$ is closable on $L^2(B,\mu_B)$. Therefore also the quadratic form 
\begin{multline}
\varphi\mapsto \int_{\mathcal{R}_{\ell,j}}\left\|\nabla_{W(x)} \varphi (x)\right\|_{T_x W(x)}^2\mu(dx)\notag\\
=\int_{\mathcal{P}_{\ell,j}}\int_B \left\|\nabla_{W(\zeta)}\varphi (\zeta)\right\|_{T_\zeta W(\zeta)}^2\mu_B(d\zeta)\hat{\mu}_{\mathcal{P}_{\ell,j}}(dB),
\end{multline}
endowed with the domain $\overline{\mathcal{D}}_0(\mathcal{E})$, is closable on $L^2(\mathcal{R}_{\ell,j}, \mu)$, see Proposition \ref{P:superpos}. Together with (\ref{E:makesmall}) and (\ref{E:gotozero}) and the triangle inequality for the seminorm $\mathcal{E}^{1/2}$ we have
\begin{align}
\lim_n \left(\int_{A_\ell} \left\|\nabla \varphi_n(x)\right\|_{ T_xW(x)}^2 \mu(dx)\right)^{1/2}&\leq \sum_{j=1}^{n_\ell}\lim_n\left(\int_{\mathcal{R}_{\ell,j}}\left\|\nabla \varphi_n(z)\right\|_{ T_zW(z)}^2 \mu(dz)\right)^{1/2}\notag\\
&=0,\notag
\end{align}
what shows (\ref{E:claim}).
\end{proof}

We prove Theorem \ref{T:closable}.

\begin{proof}
As $(\mathcal{E}, \overline{\mathcal{D}}_0(\mathcal{E}))$ is densely defined by Proposition \ref{P:dense} and closable by Lemma \ref{L:closable}, its closure is a densely defined nonnegative definite symmetric bilinear closed form $(\mathcal{E}, \overline{\mathcal{D}}(\mathcal{E}))$ on $L^2(A,\mu)$. The chain rule for the gradients $\nabla_{W(x)}$ and definition (\ref{E:defquadform}) imply that for any $\varphi\in \overline{\mathcal{D}}_0(\mathcal{E})$ and any $F\in C^1(\mathbb{R})$ with $F(0)=0$ we have $F(\varphi)\in \overline{\mathcal{D}}_0(\mathcal{E})$ and $\mathcal{E}(F(\varphi))\leq \left\|F'\right\|_{\sup}^2\mathcal{E}(\varphi)$. This property extends to all $\varphi\in\overline{\mathcal{D}}(\mathcal{E})$, \cite[Theorem 3.1.1]{FOT94}, and it also implies the Markov property, \cite[Section 1.1]{FOT94}. Since $\mu$ is finite, $\mathbf{1}\in \overline{\mathcal{D}}_0(\mathcal{E})$, and the fact that $\mathcal{E}(\mathbf{1})=0$ follows from (\ref{E:defquadform}). The first statement in (ii) is clear by the general theory mentioned earlier. Since $\mathbf{1}\in \overline{\mathcal{D}}(\mathcal{E})$ and $\mathcal{E}(\mathbf{1})=0$, (\ref{E:harmonic}) follows, and so does the statement on the bottom of the spectrum. The same proof works for (iii).
\end{proof}

Under the hypotheses of Theorem \ref{T:closable} the family $(T_xW(x))_{x\in A}$ may be viewed as a measurable field of Hilbert spaces on $A$, see \cite[Part II, Chapter 1]{Dix}, \cite[Chapter IV, Section 8]{Tak} or \cite[Section 2]{HRT13}, and we write $L^2(A, (T_xW(x))_{x\in A},\mu)$ for its direct integral with respect to $\mu$. By $\Gamma$ we denote the carr\'e du champ operator of $(\mathcal{E},\overline{\mathcal{D}}(\mathcal{E}))$, that is, the unique positive definite continuous symmetric bilinear $L^1(A,\mu)$-valued map on $\overline{\mathcal{D}}(\mathcal{E})$ such that 
$\mathcal{E}(uw,v)+\mathcal{E}(vw,u)-\mathcal{E}(w,uv)=\int_A w\:\Gamma(u,v)\:d\mu$ 
for all $u,v,w\in\overline{\mathcal{D}}(\mathcal{E})\cap L^\infty(A,\mu)$, \cite[Chapter I, Section 4.1]{BH91}. Theorem \ref{T:closable} implies that the leafwise gradient (\ref{E:classgrad}) extends from $\overline{\mathcal{D}}_0(\mathcal{E})$ to the $L^2$-context, that this extension can be used to express the carr\'e du champ, and that an associated divergence operator can be defined as minus its adjoint.

\begin{corollary}\label{C:firstorder} Suppose that Assumptions \ref{A:A1}, \ref{A:A2} and \ref{A:A3} hold and $\mu\in \mathcal{M}^{ac}(A)$.
\begin{enumerate}
\item[(i)] The operator $(\nabla, \overline{\mathcal{D}}_0(\mathcal{E}))$ extends to a densely defined closed unbounded linear operator $\nabla:L^2(A,\mu)\to L^2(A, (T_xW(x))_{x\in A},\mu)$
with domain $\overline{\mathcal{D}}(\mathcal{E})$, and (\ref{E:defquadform}) generalizes to
\[\mathcal{E}(\varphi):=\int_A \left\|\nabla\varphi (x)\right\|_{T_xW(x)}^2\:\mu(dx),\quad \varphi \in \overline{\mathcal{D}}(\mathcal{E}).\]
In particular, the carr\'e du champ operator $\Gamma$ of $\mathcal{E}$ satisfies 
\[\Gamma(\varphi)(x)=\left\|\nabla\varphi (x)\right\|_{T_xW(x)}^2\quad \text{for $\mu$-a.e. $x\in A$}\]
and all $\varphi\in \overline{\mathcal{D}}(\mathcal{E})$.
\item[(ii)] The adjoint $(-\diverg,\mathcal{D}(\diverg))$ of $(\nabla, \overline{\mathcal{D}}(\mathcal{E}))$ is a
densely defined closed unbounded operator $-\diverg:L^2(A, (T_xW(x))_{x\in A},\mu)\to L^2(A,\mu)$.
\item[(iii)] A function $\varphi\in \overline{\mathcal{D}}(\mathcal{E})$ is an element of $\mathcal{D}(\overline{\mathcal{L}})$ if and only if $\nabla \varphi\in \mathcal{D}(\diverg)$, and in this case we have $\overline{\mathcal{L}}\varphi=\diverg(\nabla \varphi)$.
\end{enumerate}
Corresponding statements hold for $(\mathcal{E},\mathcal{D}(\mathcal{E}))$ and the associated objects.
\end{corollary}

\begin{proof}
Statement (i) is immediate from Theorem \ref{T:closable} (i), (ii) follows by general theory, \cite[Theorem VIII.1]{RS80}, (iii) is clear by Theorem \ref{T:closable} (ii).
\end{proof}

Recall that a strongly continuous semigroup $(P_t)_{t>0}$ on $L^2(A,\mu)$ is said to be \emph{symmetric} if $\left\langle P_tu,v\right\rangle_{L^2(A,\mu)}= \left\langle u, P_tv\right\rangle_{L^2(A,\mu)}$
for every $u,v\in L^2(A,\mu)$ and all $t>0$, and \emph{(sub-) Markov} if for all $t>0$ and every $u\in L^2(A,\mu)$ such that $0\leq u\leq 1$ $\mu$-a.e. we have $0\leq P_tu\leq 1$ $\mu$-a.e. See \cite[Sections I.1 and I.2]{BH91} or \cite[Section 1.4]{FOT94}. A symmetric Markov semigroup $(P_t)_{t>0}$ on $L^2(A,\mu)$ is \emph{conservative} if $P_t 1=1$ for all $t>0$
and \emph{recurrent} if for every nonnegative $u\in L^1(A,\mu)$ we have $\int_0^\infty P_tu\:dt=\text{$0$ or $+\infty$}$ $\mu$-a.e. See \cite[p. 48/49]{FOT94}. The following statement is a consequence of \cite[Lemma 1.3.2 and Theorem 1.4.1]{FOT94} (see also \cite[Chapter I, Proposition 3.2.1]{BH91}) and \cite[Theorem 1.6.3 and Lemma 1.6.5]{FOT94}, its last claim is immediate from (\ref{E:harmonic}).

\begin{corollary}\label{C:semigroup} 
Suppose that Assumptions \ref{A:A1}, \ref{A:A2} and \ref{A:A3} hold and $\mu\in \mathcal{M}^{ac}(A)$. There is a unique symmetric Markov semigroup $(\overline{P}_t)_{t>0}$ on $L^2(A,\mu)$ generated by $(\overline{\mathcal{L}},\mathcal{D}(\overline{\mathcal{L}}))$ as in Theorem \ref{T:closable} (ii). It is recurrent and conservative, and $\mu$ is an invariant measure for $(\overline{P}_t)_{t>0}$ in the sense that $\int_{A} \overline{P}_t u\: d\mu=\int_{A} u \:d\mu$ for all $u\in L^2(A,\mu)$. Corresponding statements are true for the unique symmetric Markov semigroup $(P_t)_{t>0}$ on $L^2(A,\mu)$ generated by $(\mathcal{L},\mathcal{D}(\mathcal{L}))$ as in Theorem \ref{T:closable} (iii). 
\end{corollary}

If the support of $\mu$ is all of $A$ then the theory in \cite{FOT94} applies. In this case a Dirichlet form $(\mathcal{E},\mathcal{D}(\mathcal{E}))$ on $L^2(A,\mu)$ is called \emph{regular} if $\mathcal{D}(\mathcal{E})\cap C_c(A)$ is dense in $\mathcal{D}(\mathcal{E})$ and uniformly dense in $C_c(A)$. If it is regular it is said to be \emph{strongly local} if $\mathcal{E}(u,v)=0$ whenever $u,v\in \mathcal{D}(\mathcal{E})\cap C_c(A)$ are such that $v$ is constant on $\supp u$. If a Dirichlet form is regular and strongly local, then there is an associated symmetric diffusion process. Since it involves some technical notions, we first state the following theorem and then comment on these notions in Remark \ref{R:notions} below. By $d|_{A\times A}$ we denote the restriction of the geodesic distance $d$ on $M$ to $A$. 
 
\begin{theorem}\label{T:regular} Suppose that Assumptions \ref{A:A1}, \ref{A:A2} and \ref{A:A3} hold, $\mu\in \mathcal{M}^{ac}(A)$ and $\supp\mu=A$. Then $(\mathcal{E},\mathcal{D}(\mathcal{E}))$ is a strongly local regular Dirichlet form on $(A, d|_{A\times A})$. There is a symmetric Hunt diffusion process $((X_t)_{t\geq 0}, (\mathbb{P}^x)_{x\in A\setminus \mathcal{N}})$ with starting point $x$ outside some properly exceptional Borel set $\mathcal{N}\subset A$, such that for any bounded Borel function $u$ on $A$ and any $t>0$ the function $\mathcal{P}_tu$, defined by 
\begin{equation}\label{E:qcversions}
\mathcal{P}_tu(x):=\begin{cases} \mathbb{E}^x[u(X_t)]& \text{for $x\in A\setminus \mathcal{N}$} \\ 0& \text{for $x\in \mathcal{N}$},\end{cases}
\end{equation}
satisfies
\begin{equation}\label{E:coincideae}
\mathcal{P}_tu(x)=P_tu(x)\quad \text{for $\mu$-a.e. $x\in A$.}
\end{equation}
This process is unique up to equivalence. It has infinite life time. Moreover, for each bounded Borel $u$ and $t>0$ 
the function $\mathcal{P}_tu$ is an $(\mathcal{E},\mathcal{D}(\mathcal{E}))$-quasi-continuous version of $P_tu$.
\end{theorem}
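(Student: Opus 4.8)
The plan is to deduce Theorem~\ref{T:regular} from the general correspondence between regular strongly local Dirichlet forms and symmetric diffusion processes, so the first task is to verify that $(\mathcal{E}^{(\mu)},\mathcal{D}(\mathcal{E}^{(\mu)}))$ is regular and strongly local on $(\Lambda, d|_{\Lambda\times\Lambda})$. For strong locality, I would note that we already know from Theorem~\ref{T:closable}(i) that the form is local and conservative, and that $\mathbf{1}\in\mathcal{D}(\mathcal{E}^{(\mu)})$; strong locality is then a routine strengthening using the representation (\ref{E:generalizerep}) of $\mathcal{E}^{(\mu)}$ through the leafwise gradient, together with the fact that for $u,v\in \mathcal{D}_0(\mathcal{E}^{(\mu)})$ with $v$ constant on $\supp u$ one has $\nabla_{W^u(x)} v(x) = 0$ on $\{\|\nabla u\|\neq 0\}$ by the chain rule for the leafwise gradient, and then pass to the closure via \cite[Theorem 3.1.1]{FOT94} and an approximation argument. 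For regularity, the key input is $\supp\mu=\Lambda$ together with Corollary~\ref{C:dense} and Proposition~\ref{P:dense}: the space $C^1(M)|_\Lambda$ is contained in $\mathcal{D}_0(\mathcal{E}^{(\mu)})\cap C(\Lambda)$ (by Proposition~\ref{P:restrict} and Proposition~\ref{P:naive}), it is $(\mathcal{E}^{(\mu)})_1$-dense in $\mathcal{D}(\mathcal{E}^{(\mu)})$ by definition of the closure, and it is uniformly dense in $C(\Lambda)$ by Corollary~\ref{C:dense}. Since $C^1(M)|_\Lambda$ separates points of $\Lambda$ (restrictions of smooth functions on $M$ do, as $\Lambda$ is a compact metric subspace of $M$) and contains the constants, a Stone--Weierstrass argument gives uniform density of $\mathcal{D}(\mathcal{E}^{(\mu)})\cap C(\Lambda)$ in $C(\Lambda)$; this is precisely regularity in the sense of \cite{FOT94}.

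Once regularity and strong locality are established, I would invoke the standard construction (see \cite[Chapter 7, in particular Theorems 7.2.1 and 7.2.2]{FOT94}, together with \cite[Theorem 4.5.1 and Theorem 4.5.3]{FOT94} for the diffusion property) to obtain a $\mu$-symmetric Hunt process $((X_t)_{t\geq 0},(\mathbb{P}^x)_{x\in\Lambda\setminus\mathcal{N}})$ whose transition function $\mathcal{P}_tu$ defined as in (\ref{E:qcversions}) is a quasi-continuous $\mu$-version of $P_tu$ for every bounded Borel $u$; strong locality of the form forces the sample paths to be continuous (up to the lifetime), i.e.\ the process is a diffusion, by \cite[Theorem 4.5.1]{FOT94}. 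Uniqueness up to equivalence of the process is the general uniqueness statement for the Hunt process associated with a regular Dirichlet form, \cite[Theorem 4.2.8]{FOT94}. The identity (\ref{E:coincideae}) and the quasi-continuity of $\mathcal{P}_tu$ are then exactly the content of \cite[Theorem 4.2.3 and Lemma 4.2.4]{FOT94} (or \cite[Theorem 7.2.1]{FOT94}).

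It remains to argue that the process has infinite life time. This follows from conservativeness of the Dirichlet form, which in turn follows from recurrence of $(P_t)_{t>0}$ already established in Theorem~\ref{T:semigroup}: a recurrent symmetric Markov semigroup is conservative (\cite[Lemma 1.6.5]{FOT94}), hence $P_t\mathbf{1}=\mathbf{1}$, and for the associated Hunt process this translates, via \cite[Theorem 4.5.4]{FOT94} (or the fact that conservativeness of the form is equivalent to $\mathbb{P}^x(\zeta=\infty)=1$ for q.e.\ $x$), into $\zeta=\infty$ $\mathbb{P}^x$-a.s.\ for q.e.\ $x\in\Lambda$, and one may absorb the exceptional set into $\mathcal{N}$.

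The main obstacle is the regularity claim — more precisely, confirming that $\mathcal{D}(\mathcal{E}^{(\mu)})\cap C(\Lambda)$ is uniformly dense in $C(\Lambda)$. The density $C^1(M)|_\Lambda\subset \mathcal{D}_0(\mathcal{E}^{(\mu)})\cap C(\Lambda)$ is clear from Proposition~\ref{P:restrict}, but one must check that this algebra of functions is rich enough: it separates points of $\Lambda$ and does not vanish identically at any point, so Stone--Weierstrass applies. This is where the choice of the a~priori domain $\mathcal{D}_0(\mathcal{E}^{(\mu)})$ as containing $C(\Lambda)\cap C^{u,1}(\Lambda)$ — rather than, say, $\overline{\mathcal{D}}_0(\mathcal{E}^{(\mu)})$, which need not contain any continuous functions beyond $C^1(M)|_\Lambda$ — becomes essential, as already flagged in Remark~\ref{R:discussdomains}. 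Everything else is a direct transcription of the general Fukushima--Oshima--Takeda machinery, and the verification of strong locality, while requiring a bit of care in passing from $\mathcal{D}_0$ to its closure, is standard once the leafwise chain rule is in hand.
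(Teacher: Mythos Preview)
Your approach is essentially the same as the paper's: verify regularity and strong locality, then invoke the Fukushima--Oshima--Takeda machinery (the paper's proof is a terse four lines citing \cite[Theorems 7.2.1, 7.2.2, 4.2.3, 4.2.7 and Problem 4.5.1]{FOT94}). Two minor points: your claim that $C^1(M)|_\Lambda$ is $\mathcal{E}^{(\mu)}_1$-dense in $\mathcal{D}(\mathcal{E}^{(\mu)})$ ``by definition of the closure'' is not quite right---the closure is taken of $\mathcal{D}_0(\mathcal{E}^{(\mu)})$, which strictly contains $C^1(M)|_\Lambda$ (cf.\ Remark~\ref{R:discussdomains})---but this is irrelevant since $\mathcal{D}_0(\mathcal{E}^{(\mu)})\subset C(\Lambda)$ already gives the core property; and your Stone--Weierstrass detour is unnecessary, as Corollary~\ref{C:dense} already furnishes the uniform density of $C^1(M)|_\Lambda$ in $C(\Lambda)$ directly.
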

 
\begin{remark}\label{R:notions}
A \emph{diffusion process} is a strong Markov process with almost surely continuous paths, \cite[Section 4.5]{FOT94}. A \emph{Hunt process} is a particularly regular type of strong Markov process, see \cite[Section I.9]{BG68} or \cite[Appendix A.2]{FOT94}. A Borel set $\mathcal{N}\subset A$ is called \emph{properly exceptional} if it is a $\mu$-null set and $\mathbb{P}^x(X_t\in\mathcal{N}\ \text{for some $t\geq 0$})=0$, $x\in A\setminus \mathcal{N}$, see \cite[p. 134 and Theorem 4.1.1]{FOT94}. The process $((X_t)_{t\geq 0}, (\mathbb{P}^x)_{x\in A\setminus \mathcal{N}})$ is said to be \emph{symmetric} (with respect to $\mu$) if for any $t>0$ and any bounded Borel functions $u,v$ on $A$ we have $\left\langle \mathcal{P}_tu,v\right\rangle_{L^2(A,\mu)}= \left\langle u, \mathcal{P}_tv\right\rangle_{L^2(A,\mu)}$, \cite[Lemma 4.1.3]{FOT94}. Two symmetric Hunt processes are said to be \emph{equivalent} if they have a common properly exceptional set $\mathcal{N}$ such that $\mathcal{P}^{(1)}_tu(x)=\mathcal{P}^{(2)}_tu(x)$, $x\in A\setminus \mathcal{N}$, for all $t>0$ and bounded Borel $u$, see \cite[p. 147]{FOT94}; here $\mathcal{P}^{(1)}_tu$ and $\mathcal{P}^{(2)}_tu$ are defined for the respective process as in the theorem. Quasi-continuous versions in the context of regular Dirichlet forms are discussed in \cite[Section 2.1]{FOT94}.
\end{remark} 
 
\begin{proof}
The statements on regularity and strong locality are immediate. The existence of a symmetric Hunt diffusion process is guaranteed by \cite[Theorems 7.2.1 and 7.2.2]{FOT94}, its uniqueness up to equivalence by \cite[Theorem 4.2.7]{FOT94}. The quasi-continuity of the functions $\mathcal{P}_tu$ and (\ref{E:coincideae}) are verified in \cite[Theorem 4.2.3]{FOT94}.
That $((X_t)_{t\geq 0}, (\mathbb{P}^x)_{ x\in A\setminus \mathcal{N}})$ has infinite life time follows from the conservativeness of $(\mathcal{E},\mathcal{D}(\mathcal{E}))$, \cite[Problem 4.5.1]{FOT94}.
\end{proof}

\section{Hyperbolic attractors}\label{S:hyperbolic}

Let $M$ be a compact smooth Riemannian manifold and let $f:M\rightarrow M$ be a $C^{r+\alpha}$ diffeomorphism, $r\geq1$ and $\alpha\in(0,1]$. 

A compact subset $\Lambda\subset M$ is called a \textit{(topological) attractor} for $f$ if there is an open neighborhood $U$ of $\Lambda$ such that $\overline{f(U)}\subset U$ and
\begin{equation}
\Lambda=\underset{n\geq 0}\bigcap\, f^n(U).
\end{equation}
Clearly $\Lambda$ is the largest $f$-invariant subset of $U$, $f(\Lambda)=\Lambda$. If $\Lambda$ is an  attractor, then $f|_\Lambda$ is called  \textit{topologically transitive} if for any nonempty open sets $V,W\subset \Lambda$ there is some $n\in\mathbb{N}$ such that $f^n(V)\cap W\neq \emptyset$. See for instance \cite{BrinStuck, KatokHasselblatt}.

A compact $f$-invariant subset $\Lambda\subset M$ is said to be \textit{partially hyperbolic} if for each $x\in \Lambda$ there exists a splitting of the tangent space 
\[T_xM=E^s(x)\oplus E^c(x)\oplus E^u(x)\] 
and there are constants $c>0$, $0<\lambda_1\leq\mu_1<\lambda_2\leq\mu_2<\lambda_3\leq\mu_3$ with $\mu_1<1<\lambda_3$ such that for each $x\in \Lambda$ we have 

\begin{equation*}
d_xf E^s(x)=E^s(f(x)),\quad d_xf E^c(x)=E^c(f(x)),\quad d_xf E^u(x)=E^u(f(x)),
\end{equation*}
and
\begin{align}
 c^{-1}\lambda^n_1\left\| v\right\|_{T_{x}M}\leq\left\|d_xf^n v\right\|_{T_{f^n(x)}M}&\leq c\mu^n_1 \left\| v \right\|_{T_xM} \qquad \text{for $v\in E^s(x)$ and $n\geq 0$},\notag\\
 c^{-1}\lambda^n_2\left\| v\right\|_{T_{x}M}\leq\left\|d_xf^n v\right\|_{T_{f^n(x)}M}&\leq c\mu^n_2 \left\| v \right\|_{T_xM} \qquad \text{for $v\in E^c(x)$ and $n\geq 0$},\notag\\
 c^{-1}\lambda^n_3\left\| v\right\|_{T_{x}M}\leq\left\|d_xf^n v\right\|_{T_{f^n(x)}M}&\leq c\mu^n_3 \left\| v \right\|_{T_xM} \qquad \text{for $v\in E^u(x)$ and $n\geq 0$}.\notag
\end{align}
The subspaces $E^s(x)$, $E^u(x)$ and $E^c(x)$ (called the \emph{stable}, \emph{unstable} and \emph{central} subspaces at $x$, respectively) depend H\"older continuously on $x\in\Lambda$, see \cite[Theorem 4.11 and Exercise 4.15]{BarreiraPesin} or 
\cite[Theorem 2.3]{Pesin1}. The map $f$ is called a \emph{partially hyperbolic diffeomorphism} if $M$ is a partially hyperbolic set. If $E^c(x)=\lbrace0\rbrace$ then $\Lambda$ is said to be \emph{uniformly hyperbolic}. If $M$ itself is a uniformly hyperbolic set then $f$ is called an \emph{Anosov diffeomorphism}. A topological attractor $\Lambda\subset M$ is called a \emph{partially} (resp. \emph{uniformly}) \emph{hyperbolic attractor} if it is a partially (resp. uniformly) hyperbolic set.

\begin{remark}
The tangent vectors in $E^c(x)$ may be contracted or expanded, but not as sharply as vectors in $E^s(x)$ and $E^u(x)$; the central direction is `dominated' by the hyperbolic behavior of the stable and unstable directions. See \cite[Section 2.1.4]{HaPe} or \cite[p. 13/14]{Pesin1}.
\end{remark}

One of the central results in the theory of hyperbolic dynamical systems is the
Stable Manifold Theorem, see \cite[Sections 4.2--4.5]{Pesin1}. We quote it to fix notation and as a reference.

\begin{theorem}\label{T:SMT2} Let $f:M\rightarrow M$ be a $C^{r+\alpha}$ diffeomorphism of a compact smooth Riemannian manifold $M$, and $\Lambda\subset M$ be a partially hyperbolic set. Then, for every $x\in \Lambda$ there are $C^r$ embedded submanifolds $V^s(x)$ and $V^u(x)$ such that
\begin{enumerate}
\item[(i)] We have $T_xV^{s}(x)=E^s(x)$ and $T_xV^{u}(x)=E^u(x)$.
\item[(ii)] We have $f(V^s(x))\subset V^s(f(x))$ and $f^{-1}(V^u(x))\subset V^u(f^{-1}(x))$.
\item[(iii)] For any $y\in V^{u}(x)$ and any $n$ we have 
\begin{equation}\label{E:expansion}
d(f^{-n}(x),f^{-n}(y))\leq C\lambda^n d(x,y)
\end{equation} 
with constants $\lambda\in(0,1)$ and $C>0$ independent of $x$, $y$ and $n$.
\end{enumerate}
\end{theorem}

The embedded submanifolds $V^s(x)$ and $V^u(x)$
are called \emph{local stable} and \emph{local unstable manifolds}. Given a point $x\in \Lambda$, the \textit{global stable} and \textit{global unstable manifolds} of $x$ are defined as
\begin{equation}\label{E:localtoglobal}
W^{s}(x)=\underset{n\geq0}\bigcup f^{-n}(V^{s}(f^{n}(x))) \quad \text{ and }\quad 
W^{u}(x)=\underset{n\geq0}\bigcup f^{n}(V^{u}(f^{-n}(x))).
\end{equation}

\begin{remark}\label{R:strongly}
In the literature on partially hyperbolic systems the manifolds $V^s$ (resp. $V^u$) and $W^s$ (resp. $W^u$) as defined here are also referred to as \emph{strongly} local stable (resp. unstable) and \emph{strongly} global stable (resp. unstable) manifolds.
\end{remark}

Being increasing unions of embedded submanifolds, $W^{s}(x)$ and $W^{u}(x)$ are $C^r$ injectively immersed submanifolds of $M$,\cite[Chapter VII, Proposition 1.3]{Robinson}. Any partially hyperbolic attractor $\Lambda$ contains the global unstable manifolds of its points, 
\begin{equation}\label{E:partiallyglobal}
\Lambda=\bigcup_{x\in\Lambda} W^u(x), 
\end{equation}
see for instance \cite[Theorem 9.1]{HaPe}, and the union (\ref{E:partiallyglobal}) is disjoint. The set $\mathcal{F}^u=\{W^u(x): x\in \Lambda\}$ forms a continuous lamination on $\Lambda$ in the sense of \cite[Definition A.3.5]{KatokHasselblatt}, and the $W^u(x)$ are also referred to as \emph{(global, unstable) leaves}. 

\begin{remark}\label{R:Anosov}
If $f$ is an Anosov diffeomorphism the set $\mathcal{F}^u$ forms a H\"older continuous foliation on $M$ with $C^r$ leaves in the sense of \cite[Section 5.13]{BrinStuck}, see also \cite[Section 1.1, p.7]{BarreiraPesin} or \cite[remarks after Definition A.3.5]{KatokHasselblatt}. In general this foliation is not even Lipschitz, \cite{A67}, and we cannot claim that $(M, \mathcal{F}^u)$ is a foliated manifold in the sense of \cite[Definition 1.1.18]{CandelConlonI}.
\end{remark}

\begin{remark}\label{R:expanding}
If $\Lambda$ is a uniformly hyperbolic attractor and its topological dimension equals the dimension of the unstable leaves, $\dim \Lambda=\dim V^{u}(x)$, then $\Lambda$ is called an \emph{expanding attractor}, \cite[Definition (4.2)]{Williams}. An 
$n$-solenoid $\Sigma$ in the sense of \cite[Definition (3.1)]{Williams} is a foliated space in the sense of \cite[Definition 11.2.12]{CandelConlonI}, and one can find a manifold $M$ and a $C^r$ diffeomorphism $f:M\to M$ having an expanding attractor $\Lambda$ such that $f|_\Lambda$ is conjugate to the shift map $h:\Sigma\to \Sigma$ of $\Sigma$. This is proved in \cite[Theorems B and C]{Williams}, see also \cite[p. 284]{CandelConlonI}. Under additional regularity assumptions one can also find a conjugate $n$-solenoid for a given expanding attractor, \cite[Theorem A]{Williams}. 
\end{remark}

By $B(x,\delta)$ we denote the open ball in $M$ with center $x\in M$ and radius $\delta>0$.
Let $\Lambda$ be a partially hyperbolic attractor. Given $x\in \Lambda$ and $\delta>0$ the set
\begin{equation}\label{E:rectanglePH}
\mathcal{R}=\mathcal{R}(x,\delta)=\bigcup_{z\in B(x,\delta)\cap \Lambda}V^u(z),
\end{equation}
is called a \emph{rectangle} at $x$, see \cite{PesinSinai}. It follows from the definition that for any point $x$ in $\Lambda$ we can find a rectangle $\mathcal{R}_x$ and an open neighborhood $U_x\subset M$ of $x$ such that $U_x\cap\Lambda\subset \mathcal{R}_x$. Moreover, since $\Lambda$ is a compact subset of $M$, we can find $x_1,...,x_N\in \Lambda$ so that
\begin{equation}\label{E:coverLambda}
\Lambda\subset \bigcup_{i=1}^N (U_{x_i}\cap\Lambda)\subset\bigcup_{i=1}^N \mathcal{R}_{x_i}.
\end{equation}

Let $\mu$ be a Borel probability measure on $\Lambda$ and $\mathcal{R}$ a rectangle with $\mu(\mathcal{R})>0$ partitioned into local unstable manifolds as in (\ref{E:rectanglePH}). Since this partition, denoted by $\mathcal{P}_\mathcal{R}$, is measurable, cf. Appendix \ref{S:Rokhlin}, the disintegration identity
\begin{equation}\label{E:disintegration1}
\mu(E)=\int_{\mathcal{P}_\mathcal{R}} \, \mu_{V^u}(E)\, \hat{\mu}_{\mathcal{P}_\mathcal{R}}(dV^u), \quad \text{$E\subset \mathcal{R}$ Borel,}
\end{equation}
holds, where $\hat{\mu}_{\mathcal{P}_\mathcal{R}}$ is the pushforward of $\mu$ under the canonical projection onto the elements $V^u$ of the partition $\mathcal{P}_\mathcal{R}$ and for each $V^u\in \mathcal{P}_\mathcal{R}$ the symbol $\mu_{V^u}$ denotes the conditional measure on $V^u$. By $m_{V^u}$ we denote the Riemannian volume induced on the local unstable manifold $V^u$ (the \emph{leaf volume} on $V^u$).

An $f$-invariant Borel probability measure $\mu$ on a partially hyperbolic attractor $\Lambda$ is called a \textit{Gibbs $u$-measure} if for any rectangle $\mathcal{R}$ of positive measure the conditional measures $\mu_{V^u}$ in (\ref{E:disintegration1}) are absolutely continuous with respect to $m_{V^u}$ for $\hat{\mu}_{\mathcal{P}_\mathcal{R}}$-a.e. $V^u$. A Gibbs $u$-measure which is, in addition, hyperbolic in the sense of \cite[Definition 5.4.5 and p. 418]{BarreiraPesin2} is known as an \emph{SRB measure}, see also \cite {CLP17}. In particular, a Gibbs $u$-measure on a uniformly hyperbolic attractor is SRB.

We quote the following result from \cite[Theorem 4]{PesinSinai}; further details and descriptions can be found in \cite[Section 5]{CLP17}, \cite[Section 9]{HaPe}; see also \cite[Theorem 5.4]{CLP17}, \cite{BDPP08} or \cite[Theorems 9.3.4 and 9.3.6]{BarreiraPesin2}. A brief sketch of the geometric construction of Gibbs $u$-measures is provided in Appendix \ref{S:SRB}.

\begin{theorem}\label{T:Gibbsexist}
Let $\Lambda\subset M$ be a partially hyperbolic attractor associated with a $C^{1+\alpha}$ diffeomorphism
$f:M\to M$.
\begin{enumerate}
\item[(i)] There is a Gibbs $u$-measure $\mu$ on $\Lambda$.
\item[(ii)] If $\mu$ is a Gibbs $u$-measure on $\Lambda$, then the conditional densities $d\mu_{V^u}/dm_{V^u}$ of $\mu$ are H\"older continuous, bounded and uniformly bounded away from zero.
\item[(iii)] If for every $x\in\Lambda$ the orbit of the global (strongly) unstable manifold $W^u(x)$ is dense in $\Lambda$, then every Gibbs $u$-measure $\mu$ has support $\supp\mu=\Lambda$.
\item[(iv)] If $\Lambda$ is a uniformly hyperbolic attractor and $f|_\Lambda$ is topologically transitive, then the measure $\mu$ as in (i) is an SRB measure with support $\supp\mu=\Lambda$. Moreover, it is the only SRB measure on $\Lambda$ and it is 
ergodic.
\end{enumerate}
\end{theorem}

\begin{remark}
The uniqueness of Gibbs $u$-measure can be guaranteed under some additional assumptions, see for instance \cite[Section 5.4, Theorem 5.5]{CLP17}. 
\end{remark}

\begin{remark} 
Recall that a measure on $M$ is called \emph{smooth} if it has a continuous density with respect to the Riemannian volume on $M$. For Anosov diffeomorphisms the existence of an ergodic SRB measure is classical, \cite{Anosov67}, see \cite[Theorem 1.1]{BarreiraPesin}: Any $C^{1+\alpha}$ Anosov diffeomorphism of a smooth compact Riemannian manifold $M$ keeping a smooth measure invariant is ergodic with respect to this measure, and this measure is an SRB measure on $M$. 
\end{remark}

\begin{theorem}\label{T:pha}
Let $\Lambda\subset M$ be a partially hyperbolic attractor associated with a $C^{1+\alpha}$ diffeomorphism
$f:M\to M$ and let $\mu$ be a Gibbs $u$-measure on $\Lambda$.
\begin{enumerate}
\item[(i)] Assumptions \ref{A:A1}, \ref{A:A2} and \ref{A:A3} are satisfied for $A=\Lambda$, and $\mu\in \mathcal{M}^{ac}(\Lambda)$. 
\item[(ii)] Let $\overline{\mathcal{D}}_0(\mathcal{E})$, $\mathcal{E}$ and $\mathcal{D}_0(\mathcal{E})$ be defined as in (\ref{E:initialdomain}), (\ref{E:defquadform}) and (\ref{E:secdom}) with $A=\Lambda$. The forms $(\mathcal{E},\overline{D}_0(\mathcal{E}))$ and $(\mathcal{E},\mathcal{D}_0(\mathcal{E}))$ are closable on $L^2(\Lambda,\mu)$; their closures $(\mathcal{E},\overline{\mathcal{D}}(\mathcal{E}))$ and $(\mathcal{E},\mathcal{D}(\mathcal{E}))$ are local Dirichlet forms. Moreover, $\mathbf{1}\in \mathcal{D}(\mathcal{E})\subset \overline{\mathcal{D}}(\mathcal{E})$. Their generators $(\overline{\mathcal{L}},\mathcal{D}(\overline{\mathcal{L}}))$ respectively $(\mathcal{L},\mathcal{D}(\mathcal{L}))$ are non-positive definite self-adjoint operators, and they satisfy (\ref{E:harmonic}) with $A=\Lambda$.
\item[(iii)] The unique symmetric Markov semigroups $(\overline{P}_t)_{t>0}$ and $(P_t)_{t>0}$ on $L^2(\Lambda,\mu)$ generated by $(\overline{\mathcal{L}},\mathcal{D}(\overline{\mathcal{L}}))$ respectively $(\mathcal{L},\mathcal{D}(\mathcal{L}))$ are recurrent and conservative, and they leave $\mu$ invariant. 
\item[(iv)] If in addition the conditions of Theorem \ref{T:Gibbsexist} (ii) or (iii) are met, then $(\mathcal{E},\mathcal{D}(\mathcal{E}))$ is a strongly local regular Dirichlet form and there is a symmetric Hunt diffusion process $((X_t)_{t\geq 0}, (\mathbb{P}^x)_{x\in \Lambda\setminus \mathcal{N}})$ on $\Lambda$ associated with $(\mathcal{E},\mathcal{D}(\mathcal{E}))$ as in (\ref{E:coincideae}), and it is unique up to equivalence.
\end{enumerate}
\end{theorem}

The operators $(\overline{\mathcal{L}},\mathcal{D}(\overline{\mathcal{L}}))$ respectively $(\mathcal{L},\mathcal{D}(\mathcal{L}))$ may be viewed as a \emph{natural self-adjoint Laplacian on $\Lambda$} and the diffusion $((X_t)_{t\geq 0}, (\mathbb{P}^x)_{x\in \Lambda\setminus \mathcal{N}})$ as a \emph{natural symmetric leafwise diffusion}.

\begin{proof}
By Theorem \ref{T:SMT2}, (\ref{E:localtoglobal}) and (\ref{E:partiallyglobal}) Assumption \ref{A:A1} is satisfied. The H\"older continuity of the dependence $x\mapsto E^u(x)$ implies Assumption \ref{A:A2}. By (\ref{E:rectanglePH}) and (\ref{E:coverLambda})  Assumption \ref{A:A3} holds with $A_\ell:=\Lambda$, $\ell\geq 1$. Since $\mu\in \mathcal{M}^{ac}(\Lambda)$, the results now follow from Theorems \ref{T:closable} and \ref{T:regular} and Corollary \ref{C:semigroup}.
\end{proof}

\begin{remark} If $r=1$ then the $V^u(x)$ are of class $C^1$ only, so that no classical leafwise Laplacian can be defined, but self-adjoint Laplacians exist by Theorem \ref{T:pha}. Even if $r\geq 2$ it is not clear how to define related classical leafwise Laplacians if, as in Theorem \ref{T:Gibbsexist}, the densities are only known to be H\"older.
\end{remark}

\begin{remark}
The Laplacians on foliated spaces in \cite[Sections 2.1 and 2.2]{CandelConlonII} are constructed from classical Laplacians on global leaves $W$ of form $\diverg_{m_{W}}\circ \nabla_{W}$; here $\diverg_{m_{W}}$ denotes the divergence 
operator on $W$ with respect to the Riemannian volume $m_W$. They are Feller generators, and they were used to obtain structurally interesting volume measures, \cite{Candel03, CandelConlonI, CandelConlonII, 
Garnett83}. In general this approach does not lead to self-adjoint Laplacians. In our situation the volume measure is already given by a Gibbs $u$-measure, and the Laplacians in Theorem \ref{T:closable} are locally of form $\diverg_{\mu_{V^u(z)}}\circ \nabla_{V^u(z)}$ with divergence operators $\diverg_{\mu_{V^u(z)}}$ defined with respect to the weighted measures $\mu_{V^u(z)}$ on the local unstable manifolds. Here self-adjointness is clear by construction.
\end{remark}

\begin{remark}\label{R:index}
In the situation of Theorem \ref{T:pha} (iv) the \emph{index} of $(\mathcal{E},\mathcal{D}(\mathcal{E}))$ in the sense of \cite{Hino} is well-defined, and it is easily seen to equal $\dim E^u(x)$. 
\end{remark}

We briefly mention some concrete examples of hyperbolic attractors.

\begin{examples}\label{Ex:classex}\mbox{}
\begin{enumerate}
\item[(i)] Prototype examples of Anosov diffeomorphisms
are hyperbolic toral automorphisms $f_A:\mathbb{T}^n\rightarrow \mathbb{T}^n$ of the $n$-torus $\mathbb{T}^n=\mathbb{R}^n/\mathbb{Z}^n$, defined by $f_A(x)= Ax\mod 1$, where $A$ is an integer $(n\times n)$-matrix with $\vert \det A\vert= 1$ and no eigenvalue of modulus $1$, \cite[Section 1.7]{BrinStuck}, \cite[Section 7.5]{Robinson}. Its direct product with the identity map is then a partially hyperbolic diffeomorphism, \cite[Section 2.3]{Pesin1}.
\item[(ii)] Prototype examples of expanding attractors \cite{Williams}, are the \emph{Smale-Williams solenoids}, \cite[Section 1.9]{BrinStuck},  \cite[Section 17.1]{KatokHasselblatt}, \cite[Section 8.7]{Robinson}. Let $D^2\subset\mathbb{R}^2$ be the unit disk and $S^1$ be the unit circle parametrized by the angle $\theta\in \left[ 0, 2\pi \right)$. 
Given $r\in \left( 0,1\right)$ and $\alpha, \beta \in \left( 0, \min \left\lbrace r, 1-r\right\rbrace \right) $, we can define a smooth map $f:D^2\times S^1\to D^2\times S^1$ by 
$f(x,y,\theta)=(\alpha x+r\cos \theta, \beta y+r\sin\theta, 2\theta)$, and the resulting attractor $\Lambda=\bigcap_{n\geq0}f^n(D^2\times S^1)$ is a Smale-Williams solenoid. Physical interpretation can be found in \cite{Kuznetsov}.
\item[(iii)] The so-called \emph{DA-attractor} and in particular, the \emph{Plykin attractor} \cite{Plykin}, Figure 1, are the other well-known uniformly hyperbolic attractors. See \cite[Sections 17.2a. and 17.2b.]{KatokHasselblatt}, \cite[Sections 8.8 and 8.9]{Robinson}, and also \cite{Coudene},  \cite[Sections 1.2.4, 2.4 and 2.5]{Kuznetsov} for visualizations.

\begin{figure}
\centering
\captionsetup[subfigure]{labelformat=empty}
\begin{subfigure}{.4\textwidth}
\centering
\includegraphics[height=3cm]{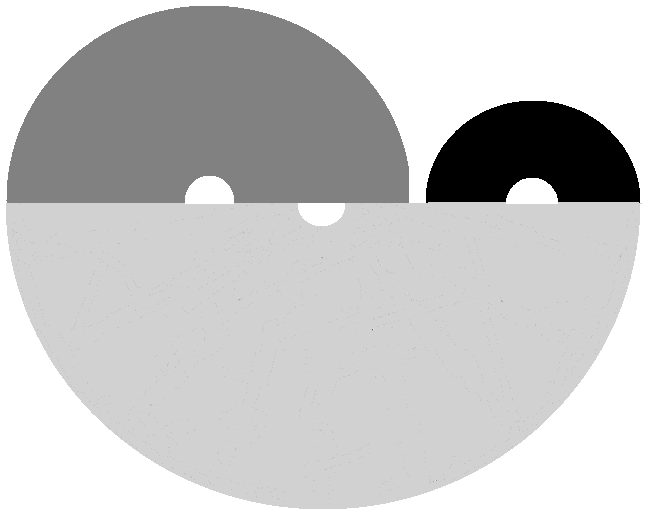}
\caption{$D$}
\end{subfigure}%
{\LARGE$\xrightarrow{f}$}
\begin{subfigure}{.4\textwidth}
\centering
\includegraphics[height=3cm]{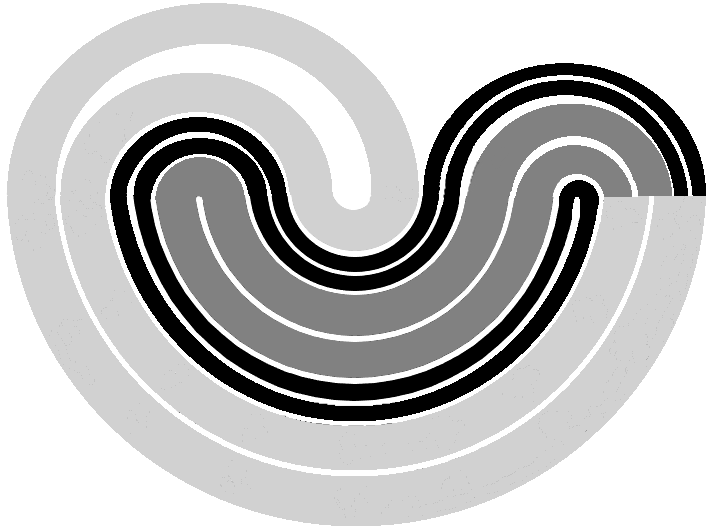}
\caption{$f(D)$}
\end{subfigure}%
\caption{The Plykin attractor in the plane}
\end{figure}
\end{enumerate}
\end{examples}

\section{Geodesic flows on manifolds with negative curvature}\label{S:geodesicflow}

A particular class of examples is formed by geodesic flows on manifolds of negative sectional curvature. Since for these examples there is an established leafwise analysis to which the Laplacians in Theorem \ref{T:pha} can be compared, we discuss them in a slightly more detailed manner.

Let $\phi:\mathbb{R}\times M\rightarrow M$ be a flow (generated by a given vector field). By \textit{time-$t$ map} we mean the diffeomorphism $\phi(t,\cdot) : M \rightarrow M$. The flow $\phi$ is said to be \emph{partially hyperbolic} if its time-$1$ map $\phi(1,\cdot)$ is a partially hyperbolic diffeomorphism. A \emph{uniformly hyperbolic} (or \emph{Anosov}) \emph{flow} is a partially hyperbolic flow with one-dimensional central subspace $E^c(x)=\text{span} \lbrace{\frac{\partial}{\partial t}\vert}_{t=0}\, \phi(t,x)\rbrace$, $x\in M$. See for instance \cite[Definition 17.4.2]{KatokHasselblatt}. If $\phi$ is a $C^{r+\alpha}$ Anosov flow, then the local stable and unstable manifolds can be obtained from a continuous time-version of the Stable Manifold Theorem for flows \cite[Theorem 17.4.3]{KatokHasselblatt}, \cite[Section 7.3.5]{BarreiraPesin}, and also in (\ref{E:localtoglobal}) a positive real index can be used in place of $n$ and $\phi(t,\cdot)$ can replace $f^n$.

Let $M$ be a compact $C^r$ manifold endowed with a $C^r$ Riemannian metric, $r\geq2$. Given $x\in M$ and $v\in T_xM$, there is a unique geodesic $\gamma_{x,v}$ such that $\gamma_{x,v}(0)=x$ and $\dot{\gamma}_{x,v}(0)=v$. By the \emph{geodesic flow} on $M$ we mean the flow $g:\mathbb{R}\times TM\rightarrow TM$ on the tangent bundle $TM$, defined by
\begin{equation}\label{E:geoflow}
g(t,(x,v)):=(\gamma_{x,v}(t),\dot{\gamma}_{x,v}(t)).
\end{equation}
Since the length of the tangent vectors are preserved by the geodesic flow, i.e. 
\[\Vert \dot{\gamma}_{x,v}(t)\Vert_{T_{\gamma_{x,v}(t)}M}=\Vert v \Vert_{T_xM},\]
it is usual to consider the restriction of $g$ to the unit tangent bundle
\[T^1M=\lbrace (x,w)\in TM:\, \Vert w\Vert_{T_xM}=1\rbrace.\]
We assume that $M$ has negative sectional curvature. Then the $C^{r-1}$ geodesic flow $g:\mathbb{R}\times T^1M\rightarrow T^1M$ is an Anosov flow \cite{Anosov67}, see for instance \cite[Theorem 9.4.1]{BG05} and \cite[Sections 17.5 and 17.6]{KatokHasselblatt}. Since the time-$1$ map $g(1,\cdot):T^1M\to T^1M$ of the geodesic flow (\ref{E:geoflow}) is a partially hyperbolic diffeomorphism, Theorem \ref{T:Gibbsexist} ensures the existence of a Gibbs $u$-measure $\mu$ on $T^1M$. On the other hand the \emph{Liouville measure} $\nu$ on $T^1M$, defined by
\begin{equation}\label{E:Liouville}
\int_{T^1M}h\,d\nu=\int_M\int_{T^1_xM}h(x,v)\,dm_{\mathbb{S}^{n-1}}\,dm_M(x),\quad h\in C(T^1M),
\end{equation}
where $m_M$ and $m_{\mathbb{S}^{n-1}}$ denote the Riemannian volume on $M$ and on the sphere $\mathbb{S}^{n-1}\equiv T^1_xM$, respectively, is invariant under $g(1,\cdot)$, see for instance \cite[Appendix C]{Poll93}. But this implies that $\mu=\nu$, \cite[Theorem 7.4.14]{FH19}, and using (\ref{E:Liouville}) it follows that $\supp\mu=T^1M$. It is well known that in the special case of a compact surface of constant negative curvature, this measure also coincides with the Bowen-Margulis measure, i.e. the measure of maximal entropy of the flow, see \cite{L90}.

Now Theorem \ref{T:pha} yields self-adjoint Laplacians  $(\overline{\mathcal{L}},\mathcal{D}(\overline{\mathcal{L}}))$ on $L^2(T^1M,\mu)$. On the other hand, there is an established analysis involving leafwise Laplacians in the unstable directions, see for instance \cite{Yue, Yue95}. We briefly compare $(\overline{\mathcal{L}},\mathcal{D}(\overline{\mathcal{L}}))$ to the Laplacians studied in \cite{Yue, Yue95}.

For any rectangle $\mathcal{R}$ and any local unstable manifold $V^u$ as in (\ref{E:rectanglePH}) let $\varrho_{V^u}=d\mu_{V^u}/dm_{V^u}$. Assume that $M$ and $f$ are of class $C^\infty$. Then each $\varrho_{V^u}$ is in $C^\infty(V^u)$, see \cite[Lemma 2.1]{Yue} and \cite[Lemma 2.5]{LMM86}. Setting $\varrho(z):=\varrho_{V^u}(z)$ if $z$ is a point in the partition element $V^u$, we can define $\varrho$ as a $C^{\infty,u}$-function on $\mathcal{R}$. On the other hand, a ``classical" leafwise Laplacian in the unstable directions can be defined by
\[\Delta \varphi(z):=\Delta_{W^u(z)}(\varphi|_{W^u(z)})(z),\quad z\in M,\quad \varphi\in C^{2,u}(M);\]
here $\Delta_{W^u(z)}$ denotes the classical Laplace-Beltrami operator on $W^u(z)$, defined on $C^2$ functions. Now recall that $\nabla$ defines the leafwise gradient on $C^{1,u}(M)$-functions as in (\ref{E:classgrad}) (with $A=M$ and $W(z)=W^u(z)$). For $\varphi\in C^{2,u}(M)$ we can define 
\begin{equation}\label{E:Laplacian}
\Delta_\mu\varphi:=\Delta\varphi+\varrho^{-1}\langle\nabla \varrho,\nabla \varphi\rangle,\quad z\in \mathcal{R},
\end{equation}
where we write $\langle\nabla \varrho,\nabla \varphi\rangle$ to denote the function $z\mapsto\langle\nabla \varrho (z),\nabla \varphi (z)\rangle_{T_zW^u(z)}$.  Definition (\ref{E:Laplacian}) can meaningfully be extended to hold for all $z\in M$. The Laplacian $\Delta_\mu\varphi$ may be seen as a leafwise analog of the Laplacian on weighted manifolds, \cite[Definition 3.17]{Grigoryanbook}. It had been studied in \cite{Yue}, see for instance \cite[Theorem 1']{Yue}, where $\mu$ had been shown to be an invariant measure for $\Delta_\mu$. This is fully consistent with our results in the sense that 
\begin{equation}\label{E:saext}
\text{$(\overline{\mathcal{L}},\mathcal{D}(\overline{\mathcal{L}}))$ is a self-adjoint extension of $(\Delta_\mu, C^{2,u}(M))$ on $L^2(T^1M,\mu)$.}
\end{equation}
Note that we had independently observed the invariance of $\mu$ in (\ref{E:harmonic}). Observation (\ref{E:saext}) means that for geodesic flows of class $C^\infty$ the theory in this article is an $L^2$-version of the smooth theory for the operator $\Delta_\mu$ as considered in \cite{Yue, Yue95}. A similar $L^2$-theory for the stable directions, including self-adjoint Laplacians, is studied extensively in \cite{Hamenstaedt97}.

\begin{remark}\label{R:orientation2}
For a definition of leafwise Laplacians as in (\ref{E:Laplacian}) it is sufficient to have unstable manifolds of class $C^2$ and densities that are $C^1$. This can also be guaranteed for certain other classes of diffeomorphisms $f:M\to M$, see for instance \cite[Remark on p. 534]{LedrappierYoung1} or \cite[p. 168]{Yue95}.
\end{remark}

\section{Quasi-invariance in the $u$-conformal case}\label{S:quasiinv}

In this section we observe that if the diffeomorphism $f$ is $u$-conformal, then the Dirichlet forms $(\mathcal{E}, \overline{\mathcal{D}}(\mathcal{E}))$ and $(\mathcal{E}, \mathcal{D}(\mathcal{E}))$ are quasi-invariant under the iterates of $f$.

Let $\Lambda$ be a partially hyperbolic attractor of a $C^{1+\alpha}$ diffeomorphism $f:M\rightarrow M$. We additionally assume that $f$ is \emph{$u$-conformal}, \cite[p. 230]{PesinChicago}, i.e.  there is a continuous function $a$ on $\Lambda$ such that for any $x\in \Lambda$ the map $d_xf|_{T_xW^u(x)}$ is $a(x)$ times an isometry $I_x: T_xW^u(x)\to T_{f(x)}W^u(f(x))$,
\begin{equation}\label{E:au}
d_xf|_{T_xW^u(x)}=a(x)\:I_x,\quad x\in \Lambda.
\end{equation}
The last condition is true in particular for all diffeomorphisms $f$ that are conformal on $\Lambda$ in the sense of \cite[Definition 4.3.1]{Barreira}. Since $f$ is $C^{1+\alpha}$ and the spaces $E^u(x)$ vary H\"older continuously in $x\in \Lambda$, the function $a$ is H\"older continuous on $\Lambda$, cf. \cite{PesinChicago}.  The iterates of $f$ and $f^{-1}$ satisfy 
\begin{equation}\label{E:powers}
d_xf^n|_{T_x W^u(x)}=\left[\prod_{k=0}^{n-1} a(f^k(x))\right] I_{f^{n-1}(x)}\circ I_{f^{n-2}(x)}\circ \cdots \circ I_x
\end{equation}
and 
\begin{equation}\label{E:negativepowers}
d_xf^{-n}|_{T_x W^u(x)}=\left[\prod_{k=1}^n a(f^{-k}(x))\right]^{-1} I_{f^{-n}(x)}^{-1}\circ I_{f^{-n+1}(x)}^{-1}\circ \cdots \circ I_{f^{-1}(x)}^{-1}
\end{equation}
for any $n\geq 1$ and $x\in \Lambda$, as can be seen using the chain rule. Taking hyperbolicity into account, it follows that
\begin{equation}\label{E:naivebounds}
\underline{a}\leq |a|\leq \overline{a}
\end{equation}
on $\Lambda$ with constants $1<\underline{a}\leq \overline{a}<+\infty$. It seems convenient to express quantities in terms of the (H\"older continuous) potential 
\[\Phi(x):=-\log |a(x)|,\quad x\in \Lambda.\]

We observe the following consequences of the $f$-invariance of $\mu$ and the $u$-conformality of $f$.

\begin{theorem}\label{T:quasiinv}
Suppose that $f$ is $u$-conformal and $\mu$ is a Gibbs $u$-measure on $\Lambda$.
\begin{enumerate}
\item[(i)] For each $n\in \mathbb{Z}$ and $\varphi\in\overline{\mathcal{D}}(\mathcal{E})$ we have $\varphi\circ f^n\in \overline{\mathcal{D}}(\mathcal{E})$, and similarly with $\mathcal{D}(\mathcal{E})$ in place of $\overline{\mathcal{D}}(\mathcal{E})$. Setting 
\[\mathcal{E}^n(\varphi):=\mathcal{E}(\varphi\circ f^n)\] 
defines local Dirichlet forms $(\mathcal{E}^n, \overline{\mathcal{D}}(\mathcal{E}))$ and $(\mathcal{E}^n, \mathcal{D}(\mathcal{E}))$ on $L^2(\Lambda,\mu)$. If $\supp\mu=\Lambda$, then $(\mathcal{E}^n, \mathcal{D}(\mathcal{E}))$ is regular and strongly local.
\item[(ii)] For each $n\in \mathbb{N}$ and $\varphi\in \overline{\mathcal{D}}(\mathcal{E})$ we have 
\begin{align}\label{E:quasiinv}
\mathcal{E}^n(\varphi)=\int_\Lambda \left\|\nabla \varphi(x)\right\|_{T_{x} W^u(x)}^2 e^{-2\sum_{k=1}^{n}\Phi(f^{-k}(x))} \mu(dx)
\end{align}
and
\begin{align}\label{E:quasiinvneg}
\mathcal{E}^{-n}(\varphi)=\int_\Lambda \left\|\nabla \varphi(x)\right\|_{T_{x} W^u(x)}^2 e^{2\sum_{k=0}^{n-1}\Phi(f^k(x))} \mu(dx).
\end{align}
\item[(iii)] For each $n\in\mathbb{N}$ and $\varphi\in \overline{\mathcal{D}}(\mathcal{E})$ we have 
\begin{equation}\label{E:sandwichtenergy}
\underline{a}^{2n} \mathcal{E}(\varphi)\leq \mathcal{E}^n(\varphi)\leq \overline{a}^{2n}\:\mathcal{E}(\varphi),
\end{equation}
with $\underline{a}$ and $\overline{a}$ from (\ref{E:naivebounds}), and similarly for $\mathcal{E}^{-n}$ with $\overline{a}^{-2n}$ and $\underline{a}^{-2n}$.
\end{enumerate}
\end{theorem}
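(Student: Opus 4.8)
The plan is to reduce everything to a pointwise identity for the leafwise gradient of $\varphi\circ f^n$ and then integrate, using the $f$-invariance of $\mu$ as a change of variables. First I would establish the chain rule in leafwise direction: since $f$ maps $W^u(x)$ into $W^u(f(x))$ by Theorem~\ref{T:SMT}(iii) and the restriction $d_xf|_{T_xW^u(x)}$ is a linear isomorphism onto $T_{f(x)}W^u(f(x))$, for $\varphi\in C^{u,1}(\Lambda)$ one has $\varphi\circ f\in C^{u,1}(\Lambda)$ with
\[
\nabla_{W^u(x)}(\varphi\circ f)(x)=\big(d_xf|_{T_xW^u(x)}\big)^{*}\,\nabla_{W^u(f(x))}\varphi(f(x)),
\]
the adjoint being taken with respect to the restricted Riemannian inner products. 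Under $u$-conformality (\ref{E:au}) the map $d_xf|_{T_xW^u(x)}=a(x)I_x$ with $I_x$ an isometry, so its adjoint is $a(x)I_x^{*}=a(x)I_x^{-1}$ and hence
\[
\big\|\nabla_{W^u(x)}(\varphi\circ f)(x)\big\|_{T_xW^u(x)}^2=a(x)^2\,\big\|\nabla_{W^u(f(x))}\varphi(f(x))\big\|_{T_{f(x)}W^u(f(x))}^2.
\]
Iterating via (\ref{E:powers}) and (\ref{E:negativepowers}) gives, for $n\ge 1$,
\[
\big\|\nabla(\varphi\circ f^n)(x)\big\|^2=\Big[\prod_{k=0}^{n-1}a(f^k(x))\Big]^2\big\|\nabla\varphi(f^n(x))\big\|^2,
\]
and correspondingly with the reciprocal product for $f^{-n}$.

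Next I would put this into the energy integral. Substituting the pointwise identity into (\ref{E:generalizerep}) and using the $f$-invariance $f_{*}\mu=\mu$ to change variables $y=f^n(x)$ (equivalently $x=f^{-n}(y)$) yields
\[
\mathcal{E}^{(\mu,n)}(\varphi)=\int_\Lambda \Big[\prod_{k=0}^{n-1}a(f^k(f^{-n}(y)))\Big]^2\big\|\nabla\varphi(y)\big\|_{T_yW^u(y)}^2\,\mu(dy),
\]
and since $f^k(f^{-n}(y))=f^{-(n-k)}(y)$, the index $j=n-k$ runs over $1,\dots,n$, so the weight becomes $\prod_{j=1}^{n}a(f^{-j}(y))^2=\exp\big(-2\sum_{k=1}^{n}\Phi(f^{-k}(y))\big)$; this is (\ref{E:quasiinv}). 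The same computation with (\ref{E:negativepowers}) gives the weight $\prod_{j=1}^{n}a(f^{-j}(f^{n}(y)))^{-2}=\prod_{k=0}^{n-1}a(f^k(y))^{-2}=\exp\big(2\sum_{k=0}^{n-1}\Phi(f^k(y))\big)$, proving (\ref{E:quasiinvneg}). Statement (iii) is then immediate: by (\ref{E:naivebounds}) each factor $a(f^{\pm j}(x))^2$ lies between $\underline{a}^2$ and $\overline{a}^2$, so the weight in (\ref{E:quasiinv}) is bounded between $\underline{a}^{2n}$ and $\overline{a}^{2n}$, giving (\ref{E:sandwichtenergy}), and analogously for $\mathcal{E}^{(\mu,-n)}$ with the reciprocal bounds. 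For part (i), the two-sided bound (\ref{E:sandwichtenergy}) together with the isometry of $\varphi\mapsto\varphi\circ f^n$ on $L^2(\Lambda,\mu)$ (again by $f$-invariance of $\mu$) shows $\varphi\mapsto\varphi\circ f^n$ maps $\mathcal{D}(\mathcal{E}^{(\mu)})$ bijectively onto itself and is bounded with bounded inverse with respect to the form norm; transporting the closed Markovian form $\mathcal{E}^{(\mu)}$ through this homeomorphism shows $\mathcal{E}^{(\mu,n)}$ is a closed form with the same domain, the Markov property is preserved since composition with $f^n$ commutes with truncation, strong locality is preserved because $\supp(u\circ f^n)=f^{-n}(\supp u)$ and constancy is transported, and regularity holds because $f^n$ is a homeomorphism of $\Lambda$ so composition preserves $C(\Lambda)$ and uniform density.

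The main obstacle I anticipate is a careful justification of the chain-rule identity and the change of variables at the level of the \emph{closed} form rather than on the core $\mathcal{D}_0(\mathcal{E}^{(\mu)})$: one must check that $\varphi\circ f^n$ actually lies in $\mathcal{D}(\mathcal{E}^{(\mu)})$ for \emph{every} $\varphi$ in the closure, not merely the a~priori domain. The natural route is to verify the pointwise chain rule and the identities (\ref{E:quasiinv})--(\ref{E:quasiinvneg}) first on a dense set such as $C^1(M)|_\Lambda$ (using that $f^n\in C^r$, so $g\circ f^n\in C^1(M)$ and Proposition~\ref{P:restrict} applies), then invoke (\ref{E:sandwichtenergy}) on that core to see that composition extends to a form-norm homeomorphism of the closure, and finally pass the identity (\ref{E:quasiinv}) to all of $\mathcal{D}(\mathcal{E}^{(\mu)})$ by approximation, using that the weight $e^{-2\sum_{k=1}^n\Phi(f^{-k}(x))}$ is continuous and bounded away from $0$ and $\infty$ so that $\mathcal{E}^{(\mu,n)}$-convergence and $\mathcal{E}^{(\mu)}$-convergence of the approximants are equivalent. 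A secondary technical point is measurability and integrability of the integrands, which is harmless here since $a$ (hence $\Phi$) is Hölder continuous and bounded.
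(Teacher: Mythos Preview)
Your proposal is correct and follows essentially the same approach as the paper: establish the pointwise chain-rule identity under $u$-conformality on the a~priori domain, integrate using the $f$-invariance of $\mu$ to obtain (\ref{E:quasiinv})--(\ref{E:quasiinvneg}), read off (\ref{E:sandwichtenergy}) from (\ref{E:naivebounds}), and then use the two-sided comparability to pass from the core to the closure. The only cosmetic difference is that the paper phrases the chain rule via the differential $d_x\varphi$ and duality $\|\nabla\varphi(x)\|_{T_xW^u(x)}=\|d_x\varphi\|_{T_x^\ast W^u(x)}$, whereas you work directly with gradients and adjoints; your discussion of why $\mathcal{E}^{(\mu,n)}$ inherits the Markov, strongly local and regular properties is in fact more explicit than what the paper writes down.
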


\begin{proof}
Suppose that $\varphi\in \overline{\mathcal{D}}_0(\mathcal{E})$. By the invariance of the global unstable manifolds and the chain rule we have
$d_x(\varphi\circ f^n)=d_{f^n(x)}\varphi\cdot d_xf^n$, and by (\ref{E:powers}), 
\begin{align}
\left\|d_x(\varphi\circ f^n)\right\|_{T_x^\ast W^u(x)}&=\left\|d_{f^n(x)}\varphi \right\|_{T_{f^n(x)}^\ast W^u(f^n(x))}  \prod_{k=0}^{n-1} |a(f^k(x))|\notag\\
&=\left\|d_{f^n(x)}\varphi \right\|_{T_{f^n(x)}^\ast W^u(f^n(x))} e^{-\sum_{k=0}^{n-1}\Phi(f^k(x))}, \quad x\in \Lambda.\notag
\end{align}
Clearly $\left\|\nabla \varphi(x)\right\|_{T_xW^u(x)}=\left\|d_x\varphi\right\|_{T_x^\ast W^u(x)}$ by duality. By (\ref{E:defquadform}) and the $f$-invariance of $\Lambda$, $\mu$ and the global unstable manifolds, the change of variable $y=f^n(x)$ yields
\begin{align}
\mathcal{E}(\varphi\circ f^n)&=\int_\Lambda \left\|d_{f^n(x)}\varphi\right\|_{T_{f^n(x)}^\ast W^u(f^n(x))}^2 e^{-2\sum_{k=0}^{n-1}\Phi(f^k(x))} \mu(dx)\notag\\
&=\int_\Lambda \left\|d_{y}\varphi\right\|_{T_{y}^\ast W^u(y)}^2 e^{-2\sum_{k=1}^{n}\Phi(f^{-k}(y))} \mu(dy).\notag
\end{align}
The bounds (\ref{E:naivebounds}) imply (\ref{E:sandwichtenergy}) for elements of $\overline{\mathcal{D}}_0(\mathcal{E})$ and that $\varphi\circ f^n\in \overline{\mathcal{D}}_0(\mathcal{E})$. As a consequence the closure of $(\mathcal{E}^{n}, \overline{\mathcal{D}}_0(\mathcal{E}))$ has domain $\overline{\mathcal{D}}(\mathcal{E})$. The proof for $(\mathcal{E}^n,\mathcal{D}(\mathcal{E}))$ is similar. The statements for $\mathcal{E}^{-n}$ follow using (\ref{E:negativepowers}). Standard estimates, \cite[Lemma 2.5, formula (2.3)]{Hino}, show that (\ref{E:sandwichtenergy}) holds for all elements of $\overline{\mathcal{D}}(\mathcal{E})$. 
\end{proof}

For any $n\in\mathbb{N}$ the carr\'e du champ operator $\Gamma^{n}$ of $\mathcal{E}^{n}$ satisfies
\begin{align}
\Gamma^{n}(\varphi)(x)&=\left\|\nabla \varphi(x)\right\|_{T_{x} W^u(x)}^2 e^{-2\sum_{k=1}^{n}\Phi(f^{-k}(x))}\notag\\
&=\Gamma(\varphi)(x)e^{-2\sum_{k=1}^{n}\Phi(f^{-k}(x))},\quad  \varphi \in \overline{\mathcal{D}}(\mathcal{E}),   \notag
\end{align}
and a similar identity holds for the carr\'e du champ operator $\Gamma^{-n}$ of $\mathcal{E}^{-n}$. Theorem \ref{T:quasiinv} may therefore be regarded as a \emph{quasi-invariance property} of the carr\'e du champ under $f$; the functions $ e^{-2\sum_{k=1}^{n}\Phi(f^{-k}(x))}$ and $e^{2\sum_{k=0}^{n-1}\Phi(f^k(x))}$ may be seen as \emph{conformal factors}. It follows directly from the proof of Theorem \ref{T:quasiinv} (or can be concluded from (\ref{E:sandwichtenergy})) that for any $\varphi \in \overline{\mathcal{D}}(\mathcal{E})$ we have
\begin{equation}\label{E:sandwichthecarre}
\underline{a}^{2n} \Gamma(\varphi)\leq \Gamma^{n}(\varphi)\leq \overline{a}^{2n}\:\Gamma(\varphi)\quad \text{$\mu$-a.e. on $\Lambda$,}
\end{equation}
and similarly for $\Gamma^{-n}$.

\begin{remark}\label{R:CM}\mbox{}
\begin{enumerate} 
\item[(i)] For the standard Dirichlet integral on Euclidean spaces both the Lebesgue measure and the carr\'e du champ $\Gamma(\varphi)=|\nabla\varphi|^2$ are invariant under translations. For the standard Dirichlet form on an abstract Wiener space the carr\'e du champ is 
invariant under translations in Cameron-Martin directions, and the Gaussian measure is quasi-invariant, see \cite[Chapter II, Section 2, in particular Theorem 2.4.4]{BH91}. In the situation of Theorem \ref{T:quasiinv} the measure $\mu$ is invariant under $f$, and the carr\'e du champ is quasi-invariant. 
\item[(ii)] By (\ref{E:sandwichthecarre}) the map $f$ may also be viewed as a quasi-isometry in a Dirichlet form sense (cf. \cite[Definition 2.17 and Lemma 2.18]{BBK06}, \cite[p.6]{BarlowMurugan}). 
\item[(iii)] Informally, the generator $\mathcal{L}^{n}$ of $\mathcal{E}^{n}$ is $\mathcal{L}^{n}\varphi=\diverg_\mu(e^{-2\sum_{k=1}^{n}\Phi\circ f^{-k}}\nabla \varphi)$, and 
similarly for the generator of $\mathcal{E}^{-n}$.
\end{enumerate} 
\end{remark}

\begin{examples}\mbox{}
\begin{enumerate}
\item[(i)] Recall the hyperbolic toral automorphisms $f_A$ defined in Examples \ref{Ex:classex} (i). For the special case that $n=2$ and 
\[A=\begin{pmatrix} 2 & 1\\ 1 & 1 \end{pmatrix}\] 
the map $f_A:\mathbb{T}^2\to \mathbb{T}^2$ is known as \textit{Arnold's cat map}, it is $u$-conformal with $a\equiv\lambda_u$ in (\ref{E:au}),
$\lambda_u=(3+\sqrt{5})/2>1$ being the larger eigenvalue of $A$.
\item[(ii)] The maps $f$ defining the Smale-Williams solenoids in Examples \ref{Ex:classex} (ii) are $u$-conformal with constant $a\equiv 2$ in (\ref{E:au}).
\end{enumerate}
\end{examples}

If, as in these examples, the function $a$ in (\ref{E:au}) is constant, then the preceding reduces to a simple rescaling. For simplicity we consider $(\mathcal{E},\mathcal{D}(\mathcal{E}))$, satements (i), (ii) and (iii) have analogous versions for $(\mathcal{E},\overline{\mathcal{D}}(\mathcal{E}))$.

\begin{corollary}\label{C:confirmation}
Suppose that $f$ is $u$-conformal and that $a$ is constant. Suppose further that $\mu$ is a Gibbs $u$-measure as in Theorem \ref{T:quasiinv} with $\supp\:\mu=\Lambda$. Let $n\in \mathbb{Z}$.
\begin{enumerate}
\item[(i)] For any $\varphi\in \mathcal{D}(\mathcal{E})$ we have 
$\mathcal{E}^{n}(\varphi)=a^{2n}\mathcal{E}(\varphi)$ and $\Gamma^{n}(\varphi)=a^{2n}\Gamma(\varphi)$.
\item[(ii)] The generator $\mathcal{L}^{n}$ of $\mathcal{E}^{n}$ has domain $\mathcal{D}(\mathcal{L})$. For any $\varphi \in \mathcal{D}(\mathcal{L})$ we have $\varphi\circ f^n \in \mathcal{D}(\mathcal{L})$ and 
\begin{equation}\label{E:multiple}
\mathcal{L}^{n}\varphi =a^{2n}\mathcal{L}\varphi.
\end{equation}
\item[(iii)] The symmetric Markov semigroup $(P^{n}_t)_{t>0}$ generated by $\mathcal{L}^{n}$ is given by $P_t^{n}=P_{a^{2n}t}$, $t>0$.
\item[(iv)] The symmetric Hunt diffusion process uniquely associated with $(\mathcal{E}^{n},\mathcal{D}(\mathcal{E}))$ in the sense of Theorem \ref{T:regular} is $((X_{a^{2n}t})_{t\geq 0},(\mathbb{P}^x)_{x\in \Lambda\setminus \mathcal{N}})$.
\end{enumerate}
\end{corollary}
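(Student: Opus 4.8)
The plan is to observe that when $a$ is constant every assertion reduces to the single identity $\mathcal{E}^{(\mu,n)}=a^{2n}\mathcal{E}^{(\mu)}$ on the common domain $\mathcal{D}(\mathcal{E}^{(\mu)})$, after which parts (i)--(iv) follow from general Dirichlet form and operator theory together with Theorems \ref{T:quasiinv}, \ref{T:semigroup} and \ref{T:regular}.

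First I would establish (i). With $a$ constant the potential $\Phi\equiv -\log|a|$ is constant, so the conformal factor $e^{-2\sum_{k=1}^{n}\Phi(f^{-k}(x))}$ in (\ref{E:quasiinv}) is the constant $|a|^{2n}=a^{2n}$; hence (\ref{E:quasiinv}) reads $\mathcal{E}^{(\mu,n)}(\varphi)=a^{2n}\mathcal{E}^{(\mu)}(\varphi)$ for every $n\in\mathbb{N}$ and $\varphi\in\mathcal{D}(\mathcal{E}^{(\mu)})$, and (\ref{E:quasiinvneg}) gives the same for negative exponents, with $n=0$ trivial. (Equivalently, in this case one may take $\underline{a}=\overline{a}=|a|$ in (\ref{E:naivebounds}), so that (\ref{E:sandwichtenergy}) and (\ref{E:sandwichthecarre}) become equalities.) Polarization then yields $\mathcal{E}^{(\mu,n)}(\varphi,\psi)=a^{2n}\mathcal{E}^{(\mu)}(\varphi,\psi)$, and (\ref{E:sandwichthecarre}) with $\underline{a}=\overline{a}=|a|$ gives $\Gamma^{(\mu,n)}(\varphi)=a^{2n}\Gamma^{(\mu)}(\varphi)$ $\mu$-a.e.

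For (ii) I would note that, since $\mathcal{E}^{(\mu,n)}=a^{2n}\mathcal{E}^{(\mu)}$ is a closed form with the same domain as $\mathcal{E}^{(\mu)}$, uniqueness in the correspondence between closed forms and self-adjoint operators behind (\ref{E:GaussGreen}) forces $\mathcal{D}(\mathcal{L}^{(\mu,n)})=\mathcal{D}(\mathcal{L}^{(\mu)})$ and $\mathcal{L}^{(\mu,n)}=a^{2n}\mathcal{L}^{(\mu)}$. To see that $\varphi\circ f^n\in\mathcal{D}(\mathcal{L}^{(\mu)})$ whenever $\varphi\in\mathcal{D}(\mathcal{L}^{(\mu)})$, I would set $U_n\varphi:=\varphi\circ f^n$: by the $f$-invariance of $\mu$ the operator $U_n$ is unitary on $L^2(\Lambda,\mu)$ with $U_n^{-1}=U_n^{\ast}=U_{-n}$, by Theorem \ref{T:quasiinv}(i) it maps $\mathcal{D}(\mathcal{E}^{(\mu)})$ onto itself, and by the definition of $\mathcal{E}^{(\mu,n)}$ together with (i) one has $\mathcal{E}^{(\mu)}(U_n\varphi,U_n\psi)=\mathcal{E}^{(\mu,n)}(\varphi,\psi)=a^{2n}\mathcal{E}^{(\mu)}(\varphi,\psi)$. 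Writing an arbitrary $\psi\in\mathcal{D}(\mathcal{E}^{(\mu)})$ as $U_n(U_{-n}\psi)$ and using (\ref{E:GaussGreen}) gives $\mathcal{E}^{(\mu)}(U_n\varphi,\psi)=-\langle a^{2n}U_n\mathcal{L}^{(\mu)}\varphi,\psi\rangle_{L^2(\Lambda,\mu)}$, whence $U_n\varphi\in\mathcal{D}(\mathcal{L}^{(\mu)})$. Part (iii) is then immediate: since $a^{2n}>0$, the semigroup generated by $\mathcal{L}^{(\mu,n)}=a^{2n}\mathcal{L}^{(\mu)}$ is the rescaled one, $e^{ta^{2n}\mathcal{L}^{(\mu)}}=P_{a^{2n}t}$ (by the spectral theorem, or by checking that $(P_{a^{2n}t})_{t>0}$ recovers $\mathcal{E}^{(\mu,n)}$ via (\ref{E:Dformandsg})--(\ref{E:Dformandsg2})).

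For (iv) I would argue that $\mathcal{E}^{(\mu,n)}$, being a positive multiple of the strongly local regular form $\mathcal{E}^{(\mu)}$ with the same domain, is itself strongly local and regular, so with $\supp\mu=\Lambda$ Theorem \ref{T:regular} gives a symmetric Hunt diffusion associated with $(\mathcal{E}^{(\mu,n)},\mathcal{D}(\mathcal{E}^{(\mu)}))$, unique up to equivalence, whose transition semigroup coincides $\mu$-a.e. with $P^{(n)}_t=P_{a^{2n}t}$. On the other hand, the deterministically time-changed process $((X_{a^{2n}t})_{t\geq0},(\mathbb{P}^x)_{x\in\Lambda\setminus\mathcal{N}})$ of Theorem \ref{T:regular} is still a symmetric Hunt diffusion (almost sure path continuity, the strong Markov property, quasi-left-continuity and $\mu$-symmetry are all preserved by the linear time change $t\mapsto a^{2n}t$), and by Theorem \ref{T:regular} its transition function equals $\mathbb{E}^x[u(X_{a^{2n}t})]=\mathcal{P}_{a^{2n}t}u(x)=P_{a^{2n}t}u(x)$ for $\mu$-a.e. $x$. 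Since the two processes thus have transition semigroups agreeing $\mu$-a.e., the uniqueness-up-to-equivalence part of Theorem \ref{T:regular} identifies them, which is (iv). The hardest parts, neither deep, will be justifying the intertwining identity $\mathcal{E}^{(\mu)}(U_n\varphi,U_n\psi)=\mathcal{E}^{(\mu,n)}(\varphi,\psi)$ used in (ii) --- this is exactly where the $f$-invariance of $\mu$, of $\Lambda$ and of the unstable leaves is needed --- and verifying in (iv) that a deterministic time change carries the Hunt diffusion of $\mathcal{E}^{(\mu)}$ to that of $\mathcal{E}^{(\mu,n)}$; I would spell both out to keep the argument self-contained.
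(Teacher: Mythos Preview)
Your proposal is correct and follows essentially the same route as the paper: (i) from Theorem~\ref{T:quasiinv} with constant $\Phi$, (ii) from (i) via the form--operator correspondence, (iii) from (ii) via the spectral theorem, and (iv) from (iii) together with (\ref{E:coincideae}) and the uniqueness in Theorem~\ref{T:regular}. The paper's own proof is a three-line sketch; you have simply spelled out the details, in particular the intertwining argument for $\varphi\circ f^n\in\mathcal{D}(\mathcal{L}^{(\mu)})$ in (ii) and the verification in (iv) that the deterministically time-changed process is again a symmetric Hunt diffusion, both of which the paper leaves implicit.
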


Since $a>1$ it follows that, roughly speaking, for $n>0$ the expansion effect of $f^n$ makes the process $(X_{a^{2n}t})_{t\geq 0}$ run faster than $(X_t)_{t\geq 0}$, while for $n<0$ it slows the process down.

\begin{proof}
Statement (i) is clear from Theorem \ref{T:quasiinv} and implies (ii). Statement (iii) follows from (ii) and the spectral representation, (iv) from (iii) and (\ref{E:coincideae}).
\end{proof}

As a consequence of (\ref{E:multiple}) harmonicity properties are preserved under $f$, this is similar to the situation in the (complex) plane \cite[Section 4.1, Problem 4]{Ahlfors}. We recall the notion of superharmonicity in the Dirichlet form sense 
\cite{BBKT10, Chen}: Given a regular Dirichlet form $(\mathcal{E},\mathcal{D}(\mathcal{E}))$ on $L^2(\Lambda,\mu)$ and an open set $O\subset \Lambda$, a function $\varphi\in \mathcal{D}(\mathcal{E})$ is called \emph{superharmonic in $O$} if $\mathcal{E}(\varphi,\psi)\geq 0$ for all nonnegative $\psi\in \mathcal{D}(\mathcal{E})\cap C_c(O)$. 

\begin{corollary}
Let the hypotheses of Corollary \ref{C:confirmation} be in force. If $n\in \mathbb{Z}$ and $\varphi$ is superharmonic in an open set $O\subset \Lambda$ then $\varphi\circ f^n$ is superharmonic in $f^{-n}(O)$. 
\end{corollary}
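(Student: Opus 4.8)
The plan is to reduce the claim to the scaling identity of Corollary \ref{C:confirmation}(i), read bilinearly. First I would record that, since $\mathcal{E}^{(\mu,n)}(\varphi)=a^{2n}\mathcal{E}^{(\mu)}(\varphi)$ for all $\varphi\in\mathcal{D}(\mathcal{E}^{(\mu)})$, polarization of both sides together with the very definition $\mathcal{E}^{(\mu,n)}(\psi):=\mathcal{E}^{(\mu)}(\psi\circ f^n)$ yields the bilinear identity
\[
\mathcal{E}^{(\mu)}(\varphi\circ f^n,\psi\circ f^n)=\mathcal{E}^{(\mu,n)}(\varphi,\psi)=a^{2n}\,\mathcal{E}^{(\mu)}(\varphi,\psi),\qquad \varphi,\psi\in\mathcal{D}(\mathcal{E}^{(\mu)}),
\]
and that the factor $a^{2n}$ is strictly positive because $a>1$. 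Note also that $\varphi\circ f^n\in\mathcal{D}(\mathcal{E}^{(\mu)})$ by Theorem \ref{T:quasiinv}(i) (or Corollary \ref{C:confirmation}(ii)), so that superharmonicity of $\varphi\circ f^n$ is at least meaningful to ask.

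Next I would fix an arbitrary nonnegative test function $\eta\in\mathcal{D}(\mathcal{E}^{(\mu)})\cap C_c(f^{-n}(O))$ and set $\psi:=\eta\circ f^{-n}$. Since $f$ is a diffeomorphism, $\psi$ is nonnegative and continuous, $\psi\in\mathcal{D}(\mathcal{E}^{(\mu)})$ by Theorem \ref{T:quasiinv}(i) applied with exponent $-n$, and $\supp\psi=f^{n}(\supp\eta)$ is a compact subset of $O$ because $f^{n}$ maps $f^{-n}(O)$ homeomorphically onto $O$. Hence $\psi\in\mathcal{D}(\mathcal{E}^{(\mu)})\cap C_c(O)$ and $\eta=\psi\circ f^{n}$.

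Finally I would combine the two: superharmonicity of $\varphi$ in $O$ gives $\mathcal{E}^{(\mu)}(\varphi,\psi)\geq 0$, and therefore
\[
\mathcal{E}^{(\mu)}(\varphi\circ f^n,\eta)=\mathcal{E}^{(\mu)}(\varphi\circ f^n,\psi\circ f^n)=a^{2n}\,\mathcal{E}^{(\mu)}(\varphi,\psi)\geq 0.
\]
As $\eta$ ranged over all nonnegative elements of $\mathcal{D}(\mathcal{E}^{(\mu)})\cap C_c(f^{-n}(O))$, this is exactly the assertion that $\varphi\circ f^n$ is superharmonic in $f^{-n}(O)$. The same computation works verbatim for every $n\in\mathbb{Z}$. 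There is no genuine obstacle here; the only point that deserves a line of care is the verification that $\psi=\eta\circ f^{-n}$ lands in $\mathcal{D}(\mathcal{E}^{(\mu)})\cap C_c(O)$, which is where one uses that $f$ is a diffeomorphism and invokes the invariance content of Section \ref{S:quasi}.
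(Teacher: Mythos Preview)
Your proof is correct and follows essentially the same route as the paper: take a nonnegative test function supported in $f^{-n}(O)$, push it forward by $f^{-n}$ to obtain a nonnegative element of $\mathcal{D}(\mathcal{E}^{(\mu)})\cap C_c(O)$, and then apply the bilinear version of the scaling identity $\mathcal{E}^{(\mu,n)}=a^{2n}\mathcal{E}^{(\mu)}$. You are simply more explicit than the paper in spelling out the polarization step and the verification that $\eta\circ f^{-n}\in\mathcal{D}(\mathcal{E}^{(\mu)})\cap C_c(O)$.
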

\begin{proof}
For any nonnegative $\psi\in \mathcal{D}(\mathcal{E}) \cap C_c(f^{-n}(O))$ the function $\psi\circ f^{-n}$ is nonnegative and in $\mathcal{D}(\mathcal{E}) \cap C_c(O)$, so that
\begin{equation*}
\mathcal{E}(\varphi\circ f^{n},\psi)=\mathcal{E}^{n}(\varphi,\psi\circ f^{-n})=a^{2n}\mathcal{E}(\varphi,\psi\circ f^{-n})\geq 0.
\end{equation*}
\end{proof}

\section{Functions of zero energy}\label{S:zero}

We quote the following special case of a well-known fact about the existence of measurable partitions, see \cite[Theorem 2]{PesinSinai}, \cite[Lemma 3.1.1 and Section 6]{LedrappierYoung1} or \cite[Section 9.4]{BarreiraPesin2} for more general versions.

\begin{theorem}\label{T:PS82} Suppose that $\Lambda\subset M$ is a uniformly hyperbolic attractor for a $C^{1+\alpha}$ diffeomorphism $f:M\to M$,  $f|_\Lambda$ is topologically transitive and $\mu$ is the unique SRB measure on $\Lambda$.  

Then there are an $f$-invariant open subset $\Lambda'$ of $\Lambda$ with $\mu(\Lambda')=1$ and a measurable partition $\mathcal{P}_{\Lambda'}$ of $\Lambda'$ into connected open subsets $G$ of the global unstable manifolds. For $\hat{\mu}_{\mathcal{P}_{\Lambda'}}$-a.e. $G\in \mathcal{P}_{\Lambda'}$ the conditional measures $\mu_G$ have strictly positive densities; here $\hat{\mu}_{\mathcal{P}_{\Lambda'}}$ denotes the quotient probability measure of $\mathcal{P}_{\Lambda'}$.
\end{theorem}

Theorems \ref{T:PS82} and \ref{T:Rokhlin} yield an additional 'global' formula for $\mathcal{E}$. 

\begin{corollary}\label{C:addformula}
Let $f$, $\Lambda$, $\mu$ and $\mathcal{P}_{\Lambda'}$ be as in Theorem \ref{T:PS82}. Then we have 
\begin{equation}\label{E:globallyintegrated}
\mathcal{E}(\varphi)=\int_{\mathcal{P}_{\Lambda'}} D^{(\mu_G)}(\varphi) \:\hat{\mu}_{\mathcal{P}_{\Lambda'}}(dG),\quad \varphi\in \mathcal{D}_0(\mathcal{E}),
\end{equation}
where $D^{(\mu_G)}$ denotes the Dirichlet integral on $G\in \mathcal{P}_{\Lambda'}$ with respect to $\mu_G$ as defined in (\ref{E:Dirichletintegral}).
\end{corollary}
\begin{proof}
For any $\varphi\in \mathcal{D}_0(\mathcal{E})$ definitions (\ref{E:classgrad}) and (\ref{E:defquadform}), together with Theorem \ref{T:PS82} and (\ref{E:Dirichletintegral}), yield
\[
\mathcal{E}(\varphi)=\int_{\Lambda'}\left\|\nabla_{W^u(x)} \varphi(x)\right\|^2_{T_xW^u(x)}\mu(dx)=\int_{\mathcal{P}_{\Lambda'}}\int_{G} \left\|\nabla_{G} \varphi(x)\right\|^2_{T_xG}\mu_{G}(dx)\hat{\mu}_{\mathcal{P}_{\Lambda'}}(dG).
\]
\end{proof}

The right hand side of (\ref{E:globallyintegrated}) makes sense for  any bounded Borel function $\varphi$ on $\Lambda$ since for such $\varphi$ we have $\varphi|_{G}$ is in $L^2(G,\mu_{G})$. We therefore use (\ref{E:globallyintegrated}) to consistently define $\mathcal{E}$ to all bounded Borel $\varphi$.

\begin{corollary}\label{C:locallyconst}
Let $f$, $\Lambda$, $\mu$ and $\mathcal{P}_{\Lambda'}$ be as in Theorem \ref{T:PS82}.
If $\varphi:\Lambda\to \mathbb{R}$ is a bounded Borel function with $\mathcal{E}(\varphi)=0$, then $\varphi$ is constant on $\hat{\mu}_{\mathcal{P}_{\Lambda'}}$-a.e. $G\in \mathcal{P}_{\Lambda'}$.
\end{corollary}

\begin{proof} By (\ref{E:globallyintegrated}) we have $D^{(\mu_G)}(\varphi)=0$ for $\hat{\mu}_{\mathcal{P}_{\Lambda'}}$-a.e. $G\in \mathcal{P}_{\Lambda'}$, and by the positivity of the densities 
\[\int_{G} \left\|\nabla_{W^u(y)}\varphi(y)\right\|_{T_yW^u(y)}^2 m_{G}(dy)=0.\]
Since any such $G$ is a connected open subset of a Riemannian manifold of dimension $d_u$, a standard argument shows that $\varphi|_{G}$ must be constant: For any $y\in G$ we can find a local chart $(U,x)$ around $y$ such that
\[\int_{x(U)} |\nabla_{\mathbb{R}^{d_u}}(\varphi\circ x^{-1})(y)|^2\:dy\leq c\:\int_{U}\left\|\nabla_{W^u(y)}\varphi(y)\right\|^2_{T_yW^u(y)}m_{W^u(y)}(dy)=0,\]
hence $\varphi$ is constant on $U$.  
\end{proof}

We observe the following complement to Corollary \ref{C:locallyconst}.

\begin{theorem}\label{T:noLiouville} Suppose that $\Lambda\subset M$ is a uniformly hyperbolic attractor for a $C^{1+\alpha}$ diffeomorphism $f:M\to M$,  $f|_\Lambda$ is topologically transitive and $\mu$ is the unique SRB measure on $\Lambda$. Let $\Lambda'$ and $\mathcal{P}_{\Lambda'}$ be as in Theorem \ref{T:PS82}.
\begin{enumerate}
\item[(i)] There is a bounded Borel function $\varphi:\Lambda\to \mathbb{R}$ with $\mathcal{E}(\varphi)=0$ which is constant on each $G\in \mathcal{P}_{\Lambda'}$ but not $\mu$-a.e. constant on $\Lambda$.
\item[(ii)] Any Borel function which is constant on each individual global unstable leave is $\mu$-a.e. constant on $\Lambda$.
\item[(iii)] The function $\varphi$ as in (i) is not constant on each individual global unstable leave.
\end{enumerate}
\end{theorem}

Recall the notions of equivalence mod zero for partitions and $\sigma$-algebras, \cite[Definition 4.5]{Walters}. We write $\mathcal{B}^u$ and $\mathcal{B}(\mathcal{P}_{\Lambda'})$ for the $\sigma$-algebras consisting of all Borel sets that are unions of global unstable leaves respectively unions of elements of $\mathcal{P}_{\Lambda'}$. By $\pi(f)$ we denote the Pinsker $\sigma$-algebra of a given measure preserving map $f$, \cite[Section 4.10]{Walters}.

\begin{proof}
Statement (iii) is immediate from (i) and (ii). Since by construction the measurable partition $\mathcal{P}_{\Lambda'}$ is nontrivial, also $\mathcal{B}(\mathcal{P}_{\Lambda'})$ cannot be trivial mod zero. Consequently there is some $F\in \mathcal{B}(\mathcal{P}_{\Lambda'})$ with $0<\mu(F)<1$. The function $\varphi=\mathbf{1}_F$ is Borel measurable, and it is constant on each $G\in \mathcal{P}_{\Lambda'}$. Since each $G$ is an open subset of a global unstable leave, we have
$D^{\mu_G}(\varphi)=0$, and therefore $\mathcal{E}(\varphi)=0$ by (\ref{E:globallyintegrated}), what proves (i). To see (ii) note that under the stated hypotheses the measure preserving map $f$ on $(\Lambda,\mathcal{B}(\Lambda),\mu)$ is Bernoulli, \cite{Bowen75, Ledrappier}, and consequently the $\sigma$-algebra $\pi(f)$ is trivial mod zero, \cite[Theorems 4.30 and 4.34 and Section 4.10]{Walters}. Since $\pi(f)$ equals $\mathcal{B}^u$ mod zero, \cite[Theorem B and p. 536]{LedrappierYoung1}, also $\mathcal{B}^u$ is trivial mod zero. Suppose now that $\varphi$ is Borel, constant on each individual global unstable leave, but not $\mu$-a.e. constant on $\Lambda$. Then there are numbers $a<b$ such that both $\{\varphi\leq a\}$ and $\{\varphi\geq b\}$ are elements of $\mathcal{B}^u$ of positive measure, a contradiction.
\end{proof}

\begin{remark}\mbox{}
\begin{enumerate}
\item[(i)] We can fix a $\hat{\mu}_{\mathcal{P}_{\Lambda'}}$-null set $\mathcal{N}\subset \mathcal{P}_{\Lambda'}$ so that on all $G\in \mathcal{P}_\Lambda\setminus \mathcal{N}$ the measures $\mu_{G}$ are absolutely continuous and view the family $(L^2(G,\mu_{G}))_G$ with $G$ ranging over $\mathcal{P}_{\Lambda'}\setminus \mathcal{N}$ as a measurable field of Hilbert spaces. Then the space $L^2(\Lambda,\mu)$ is isometrically isomorphic to the direct integral  $L^2(\mathcal{P}_{\Lambda'},(L^2(G,\mu_{G}))_G, \hat{\mu}_{\mathcal{P}_{\Lambda'}})$, and its elements can be interpreted as a class of $\hat{\mu}_{\mathcal{P}_{\Lambda'}}$-square integrable sections $\varphi=(\varphi_{G})_G$ with elements $\varphi_{G}\in L^2(G,\mu_{G})$. Writing $\widetilde{\mathcal{D}}(\mathcal{E})$ for the space of all $\varphi\in L^2(\Lambda,\mu)$ such that $G\mapsto D^{\mu_G}(\varphi_G)\in L^1(\mathcal{P}_{\Lambda'},\hat{\mu}_{\mathcal{P}_{\Lambda'}})$, we can extend $\mathcal{E}$ further to a quadratic form $(\mathcal{E},\widetilde{\mathcal{D}}(\mathcal{E}))$ by setting $\mathcal{E}(\varphi):=\int_{\mathcal{P}_{\Lambda'}} D^{(\mu_G)}(\varphi_G) \:\hat{\mu}_{\mathcal{P}_{\Lambda'}}(dG)$, $\varphi \in \widetilde{\mathcal{D}}(\mathcal{E})$. By Proposition \ref{P:superpos} this is a Dirichlet form, clearly local, and $\overline{\mathcal{D}}(\mathcal{E})\subset \widetilde{\mathcal{D}}(\mathcal{E})$.
\item[(ii)] Since functions $\varphi$ as in Theorem \ref{T:noLiouville} (i) are in $\widetilde{\mathcal{D}}(\mathcal{E})$, 
Theorem \ref{T:noLiouville} (i) may be interpreted as an existence statement for nonconstant bounded harmonic functions in this larger domain. No $L^p$-Liouville theorem ($1\leq p\leq +\infty$) and no Liouville theorem for bounded energy finite functions holds, cf. \cite{Grigoryan87}, \cite[p. 319/320]{Grigoryanbook}. Theorem \ref{T:noLiouville} (iii) could be rephrased by saying that no foliated Liouville property (in a measurable sense) holds, see \cite{FZ03}.
\end{enumerate}
\end{remark}

\section{Hyperbolic attractors with singularities}\label{S:has}

We consider a more general class of hyperbolic attractors induced by maps with discontinuities, \cite{Pesin92}, \cite[Section 8]{CLP17}. The notation in this section follows \cite{Pesin92}, up to minor details.

Let $M$ be a smooth Riemannian manifold, $U\subset M$ a relatively compact open set and $N\subset U$ a closed subset. Let $f:U\setminus N\rightarrow U$ be a $C^2$ diffeomorphism onto its image. We define $N^+:=N\cup \partial U$
as the singularity set for $f$, and 
\[N^-:=\lbrace y\in U: \text{there are $z\in N^+$ and $z_n\in U\setminus N^+$ with $z_n\to z$ and $f(z_n)\to y$}\rbrace\]
as the singularity set for $f^{-1}$. Assume that $f$ is such that 
\begin{align}
\left\|d_x^2f \right\|&\leq c_1\,d(x, N^+)^{-\alpha_1} \qquad \text{for any $x\in U\setminus N$},\notag\\
\left\| d_x^2f^{-1}\right\|&\leq c_2\,d(x, N^-)^{-\alpha_2} \qquad \text{for any $x\in f(U\setminus N)$,}\notag
\end{align}
where $c_i>0$, $\alpha_i\geq0$, $i=1,2$, and $\left\|\cdot\right\|$ denotes the operator norm. A \emph{topological attractor with singularities} for $f$ is defined to be the compact set $\Lambda:=\overline{D}$ where
\begin{equation}\label{A:singattractor}
D:=\bigcap_{n\geq0} f^n(U^+)\quad \text{and}\quad U^+:=\lbrace x\in U: f^n(x)\notin N^+,  n=0, 1, 2,\dots\rbrace.
\end{equation}
The set $D$ is invariant under $f$ and $f^{-1}$, and hence so is $\Lambda$.

To define a hyperbolic structure for $f$ on the set $D$ it is covenient to apply cone techniques, see for instance \cite[Section 5.4]{BrinStuck} or \cite[Section 6.4]{KatokHasselblatt}. Given $x\in M$, $a>0$ and a subspace $P(x)\subset T_xM$, the \textit{cone} at $x$ around $P(x)$ with angle $\theta$ is the set \[C(x, P(x), \theta):=\lbrace v\in T_xM: \measuredangle(v, P(x))\leq \theta\rbrace.\] Here we write $\measuredangle(v, P):=\min_{w\in P} \measuredangle(v,w)$ for any $P\subset T_xM$, and we define $\measuredangle(P', P)$ for $P,P'\subset T_xM$ in a similar manner. A topological attractor with singularities $\Lambda$ is said to be a \textit{uniformly hyperbolic attractor with singularities} (or \emph{generalized hyperbolic attractor}) if there exist constants $c>0$, $\lambda \in (0,1)$, and $\theta(x)>0$ , $x\in U\setminus N^+$, together with subspaces $P^s(x),P^u(x)\subset T_xM$, $x\in U\setminus N^+$, of complementary dimension, such that the two families of \emph{stable} and \emph{unstable cones} 
\[C^s(x)=C^s(x, P^s(x), \theta(x))\quad \text{and}\quad C^u(x)=C^u(x, P^u(x), \theta(x))\] 
satisfy the following conditions:
\begin{enumerate}
\item[(i)] the angles $\measuredangle(C^s(x),C^u(x))$, $x\in U\setminus N^+$, are uniformly bounded away from zero, 
\item[(ii)] we have 
\begin{align}
df(C^u(x))&\subset C^u(f(x))\qquad \,\,\,\,\,\,\text{ for any $x\in U\setminus N^+$,}\notag\\
df^{-1}(C^s(x))&\subset C^s(f^{-1}(x))
\qquad \text{ for any $x\in f(U\setminus N^+)$},\notag 
\end{align}
\item[(iii)] for any $n\geq 1$ we have
\begin{align}
\left\|d_xf^n v\right\|_{T_{f^n(x)}M}&\geq  c\lambda^{-n} \left\| v \right\|_{T_xM} \qquad \text{for $v\in C^u(x)$ and $x\in U^+$},\notag\\
\left\| d_xf^{-n}v\right\|_{T_{f^{-n}(x)}M}&\geq c\lambda^{-n} \left\| v \right\|_{T_xM} \qquad \text{for $v\in C^s(x)$ and $x\in f^n(U^+)$}.\notag
\end{align}
\end{enumerate}
See \cite[Section 1.3]{Pesin92} or \cite[Section 8]{CLP17}. 

Now let $\Lambda$ be a generalized hyperbolic attractor; we continue to use  the above notation. By the standard arguments in \cite{Pesin77}, for any $x\in D$ the subspaces 
\[E^s(x)=\bigcap_{n\geq0} df^{-n} C^s(f^n(x)) \quad \text{and} \quad E^u(x)=\bigcap_{n\geq0} df^{n} C^u(f^{-n}(x))\]
form a splitting of the tangent space $T_xM=E^s(x)\oplus E^u(x)$ such that for any $n\geq0$
\begin{align}
\left\|d_xf^n v\right\|_{T_{f^n(x)}M}&\leq c\lambda^n \left\| v \right\|_{T_xM} \quad \quad\text{for $v\in E^s(x)$},\notag\\
\left\| d_xf^{-n}v\right\|_{T_{f^{-n}(x)}M}&\leq c\lambda^{n} \left\| v \right\|_{T_xM} \qquad \text{for $v\in E^u(x)$,}\notag
\end{align}
meaning that $D$ is a uniformly hyperbolic set contained in $\Lambda$, see \cite[p. 128]{Pesin92}. For any $\varepsilon>0$ and $\ell\geq 1$ we write
\begin{align*}
D^+_{\varepsilon, \ell}&:=\lbrace x\in \Lambda: d(f^n(x), N^+)\geq \ell^{-1} e^{-\varepsilon n},\, n\geq 0\rbrace,\\
D^-_{\varepsilon, \ell}&:=\lbrace x\in \Lambda: d(f^{-n}(x), N^-)\geq \ell^{-1} e^{-\varepsilon n},\, n\geq 0\rbrace,
\end{align*}
and 
\begin{equation}\label{E:Dminus}
D^+_{\varepsilon}:=\bigcup_{\ell\geq 1} D^+_{\varepsilon, \ell},\qquad D^-_{\varepsilon}:=\bigcup_{\ell\geq 1} D^-_{\varepsilon, \ell}.
\end{equation}
There is an adapted version of the Stable Manifold Theorem, \cite[Proposition 4]{Pesin92}:

\begin{theorem}\label{T:SMT3}
If $f$, $ D^+_\varepsilon$ and $ D^-_\varepsilon$ are as above, then for sufficiently small $\varepsilon>0$
local stable manifolds $V^s(x)$, $x\in D^+_\varepsilon$, and local unstable manifolds $V^u(x)$, $x\in D^-_\varepsilon$, exist and possess properties similar to those in Theorem \ref{T:SMT2} (i), (ii) and (iii). In particular, they are embedded submanifolds.
\end{theorem}

Henceforth we fix $\varepsilon>0$ at a sufficiently small value and write $D^-$ and $D_\ell^-$ instead of $D^-_\varepsilon$ and $D^-_{\varepsilon, \ell}$, respectively.
For any $\ell$ the manifolds $V^u(x)$ and the tangent spaces $T_xV^u(x)$ depend continuously on $x\in D_{\ell}^-$, \cite[Proposition 4]{Pesin92}, and we have $V^u(x)\subset D^-$ for any $x\in D^-$, see \cite[Proposition 5]{Pesin92}. Similarly as in (\ref{E:localtoglobal}), the global unstable manifolds are defined as
\begin{equation}\label{E:Wus}
W^u(x)=\bigcup_{n\geq0} {\hat{f}}^{n}(V^u(f^{-n}(x))),\quad x\in D^-,
\end{equation}
where we write ${\hat{f}}^n(E):=f^n(E\setminus N^+)$,  $E\subset \Lambda$.

Now let $\ell\geq 1$ be fixed. We write $B^u(z,\delta)$ for the open ball in $W^u(z)$ with center $z$ and radius $\delta$. By \cite[Proposition 7]{Pesin92}, there are $r^{(1)}_{\ell}>r^{(2)}_{\ell}>r^{(3)}_{\ell}>0$ such that for any $x\in D_{\ell}^-$ and any $z\in B(x,r^{(3)}_{\ell})\cap D^-_{\ell}$ the intersection $V^u(z)\cap W(x)$ of $V^u(z)$ and $W(x):=\exp_x\lbrace v\in E^s(x):\, \Vert v\Vert\leq r^{(1)}_{\ell}\rbrace$ is precisely a single point denoted by $[z,x]$ and, moreover, $B^u([z,x],r_\ell^{(2)})\subset V^u(z)$. Given $x\in D_{\ell}^-$ and $\delta\leq r^{(3)}_\ell$ we define the rectangle $\mathcal{R}_{\ell}(x,\delta)$ by 
\begin{equation}\label{E:rectangleGH}
\mathcal{R}=\mathcal{R}_{\ell}(x,\delta)=\bigcup_{z\in B(x,\delta)\cap D^-_{\ell}}B^u([z,x],r^{(2)}_{\ell}).
\end{equation}
For any fixed $\ell\geq 1$ the set $D_\ell^-$ is compact, and we can find points $x_1,...,x_{N_\ell}\in D_\ell^-$, neighborhoods $U_{x_i}$ of the $x_i$ in $U\setminus N$ and rectangles $\mathcal{R}_{x_i}$ such that $U_{x_i}\cap D_\ell^-\subset \mathcal{R}_{x_i}$ and we have 
\begin{equation}\label{E:coverDl}
D^-_{\ell}\subset \bigcup_{i=1}^{N_\ell} (U_{x_i}\cap D^-_{\ell})\subset\bigcup_{i=1}^{N_\ell} \mathcal{R}_{x_i}.
\end{equation}

\begin{remark}\label{R:GibbsGH}
Under suitable conditions one can ensure the existence of Gibbs $u$-measures, we quote \cite[Theorem 1]{Pesin92}: Suppose that $\Lambda$ is a uniformly hyperbolic attractor with singularities for the $C^2$ diffeomorphism $f$ and assume that there are a point $x\in D^-$ and constants $c>0$, $q>0$, $\varepsilon_0>0$ such that for any $0<\varepsilon\leq \varepsilon_0$ and $n\geq0$,
\begin{equation}\label{S:nullsing}
m_{V^u(x)}(V^u(x)\cap f^{-n}(U(\varepsilon, N^+)))\leq c\:\varepsilon^q,
\end{equation}
where $U(\varepsilon, N^+)$ is the $\varepsilon$-parallel neighborhood of $N^+$ in $M$. Then there is a Gibbs $u$-measure $\mu$ on $D^-\subset \Lambda$, it satisfies $\mu(D^-)=1$, and its conditional densities on the partition elements in (\ref{E:rectangleGH})  are H\"older continuous, uniformly bounded and uniformly bounded away from zero.
\end{remark}

Remark \ref{R:GibbsGH} allows to apply Theorem \ref{T:closable} and its consequences.

\begin{theorem}\label{T:has}
Let $f$ and $\Lambda$ be as in Remark \ref{R:GibbsGH} and let $\mu$ be a Gibbs $u$-measure on $D^-$. Then the statements in Theorem \ref{T:pha} (i), (ii) and (iii) hold with $A=D^-$ in place of $\Lambda$.
\end{theorem}

\begin{proof}
By (\ref{E:Dminus}), Theorem \ref{T:SMT3} and (\ref{E:Wus}) Assumption \ref{A:A1} is satisfied. The continuity of the dependence $x\mapsto T_xV^u(x)$ implies Assumption \ref{A:A2}. By (\ref{E:rectangleGH}) and (\ref{E:coverDl}) Assumption \ref{A:A3} holds with $A_\ell=D_\ell^-$. We may again apply Theorems \ref{T:closable} and \ref{T:regular} and Corollary \ref{C:semigroup} to obtain the stated results.
\end{proof}

\begin{remark}
In general $D^-$ is a proper subset of $\Lambda$, and tangent spaces in the unstable directions are defined only at points of $D^-$. But since $D^-$ has full measure, we have  $L^2(\Lambda,\mu)=L^2(D^-,\mu)$, so that the Dirichlet forms, Laplacians 
and semigroups in Theorem \ref{T:has} may be regarded as objects on $L^2(\Lambda,\mu)$, and in that sense 'on $\Lambda$'.
\end{remark}

We recall some examples for attractors with singularities.

\begin{examples}\mbox{}
\begin{enumerate}
\item[(i)] Let $I=(-1,1)$, $U=I\times I$ and $N=I\times\lbrace 0\rbrace$ and let $f:U\setminus N\rightarrow U$ be a map of the form $f(x,y):=(g(x,y),h(x,y))$, where $g,h$ are functions given by
\begin{align*}
g(x,y)&=(-a_2\vert y\vert^{\nu_0}+a_2x\,\text{sgn}\,y\vert y\vert^\nu+1)\,\text{sgn}\,y,\\
h(x,y)&=((1+a_1)\vert y\vert^{\nu_0}-a_1)\,\text{sgn}\,y,
\end{align*}
for constants $0<a_1<1$, $0<a_2<\frac{1}{2}$, $\nu>1$, $1/(1+a_1)<\nu_0<1$. The resulting attractor is the (\emph{geometric})\emph{ Lorenz attractor}; it is a generalized hyperbolic attractor. A more common definition of the Lorenz attractor is as the attractor (in ODE sense) for the nonlinear system
\begin{equation*}\label{L:lorenzeq}
\dot{x}=-\sigma x+\sigma y, \quad  \dot{y}=rx-y-xz \quad \text{and}\quad  \dot{z}=xy-bz 
\end{equation*}
for the particular parameters $\sigma=10$, $b=\frac{8}{3}$ and $r=28$, illustrated in Figure 2. Further details can be found in \cite[Section 5.2]{Pesin92}, \cite[Section 13.3]{HaPe03} and \cite[Section 2.2]{Kuznetsov}.

\begin{figure}
\centering
\includegraphics[height=5cm]{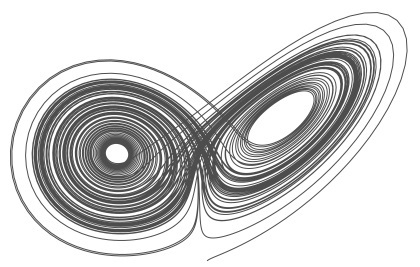}
\caption{The Lorenz attractor}
\end{figure}

\item[(ii)] The \emph{Lozi attractor},  \cite{Lozi,Pesin92,Young}, and the \emph{Belykh attractor}, \cite[p. 149]{Pesin92}, \cite{Sataev99} belong to the class of hyperbolic attractors with singularities.
\end{enumerate}
\end{examples}

\begin{remark}
Partially hyperbolic attractors with singularities $\Lambda=\overline{D}$ can similarly be constructed using (\ref{A:singattractor}), with the difference that $D$ must be a partially hyperbolic set. Stable and unstable manifolds can be constructed analogously, as well as Gibbs $u$-measures, \cite[Theorem 12]{Pesin92}. A geometric example of such attractors is provided in \cite[Section 5.2, Example 4]{Pesin92}. See also \cite{LS82}.
\end{remark}

\section{Attractors with nonuniformly hyperbolic structure}\label{S:nonuni}

Another generalization of uniformly hyperbolic dynamical systems is the theory of \emph{nonuniformly hyperbolic} dynamical systems (\emph{Pesin theory}); a detailed account can be found in \cite{BarreiraPesin, BarreiraPesin2}; see also \cite{Alves}, \cite{CLP17}, \cite{Pesin}.

Let $M$ and $f$ be as in Section \ref{S:hyperbolic}. Let $\Lambda\subset M$ be an $f$-invariant subset (not necessarily compact) and $\mu$ be an $f$-invariant Borel probability measure on $\Lambda$. The set $\Lambda$ is said to be \emph{nonuniformly hyperbolic} if for $\mu$-a.e. $x\in\Lambda$ there is a splitting of the tangent space \emph{stable} and \emph{unstable} subspaces 
\begin{equation}\label{N:nonusplitting}
T_xM=E^s(x)\oplus E^u(x)
\end{equation}
and there are measurable functions $c, \lambda,\varepsilon,k:\Lambda\rightarrow(0,\infty)$ with $\lambda(x)e^{\varepsilon(x)}<1$ such that for $\mu$-a.e. $x\in\Lambda$ we have $d_xf E^s(x)=E^s(f(x))$ and $d_xf E^u(x)=E^u(f(x))$,
\begin{align}
\left\|d_xf^n v\right\|_{T_{f^n(x)}M}&\leq c(x)\lambda(x)^n e^{\varepsilon(x)n} \left\| v \right\|_{T_xM} \qquad \text{for $v\in E^s(x)$ and $n\geq 0$},\notag\\
\left\| d_xf^{-n}v\right\|_{T_{f^{-n}(x)}M}&\leq c(x)\lambda(x)^n e^{\varepsilon(x)n} \left\| v \right\|_{T_xM} \qquad \text{for $v\in E^u(x)$ and $n\geq 0$,}\notag
\end{align}
$\measuredangle(E^s(x),E^u(x))\geq k(x)$, moreover, $\lambda(f(x))=\lambda(x)$, $\varepsilon(f(x))=\varepsilon(x)$ and 
\[c(f^n(x))\leq e^{\varepsilon(x)\vert n\vert}c(x)\quad \text{ and }\quad k(f^n(x))\geq e^{-\varepsilon(x)\vert n\vert}k(x),\quad n\in\mathbb{Z}.\]

There is a suitable version of the Stable Manifold Theorem, \cite[Chapter 7]{BarreiraPesin2} or \cite[Section 7.1]{BarreiraPesin}:
\begin{theorem}\label{T:SMTnonu} Let $M$ and $f$ be as in Section \ref{S:hyperbolic}, let $\mu$ be an $f$-invariant Borel probability measure on a nonuniformly hyperbolic set $\Lambda\subset M$ as above. Then, for $\mu$-a.e. $x\in \Lambda$ there are $C^r$ embedded submanifolds $V^s(x)$ and $V^u(x)$ that satisfy the properties (i) and (ii) in Theorem \ref{T:SMT2} and for any $y\in V^{u}(x)$ and $n\geq0$ we have 
\begin{equation}\label{E:expansionnonu}
d(f^{-n}(x),f^{-n}(y))\leq c(x)\lambda (x)^n e^{\varepsilon(x)n}d(x,y).
\end{equation} 
\end{theorem}

At $\mu$-a.e. $x\in \Lambda$ the global unstable manifold $W^u(x)$ containing $x$ can be defined as in (\ref{E:localtoglobal}), and we have $W^u(x)\subset \Lambda$.

Given $\ell\geq1$, a \emph{regular set} (or \emph{Pesin set}) of level $\ell$ is defined by
\[\Lambda_\ell:=\lbrace x\in\Lambda: c(x)\leq\ell\text{ and } k(x)\geq\ell^{-1}\rbrace.\]

Each regular set $\Lambda_\ell$ is compact. For any $\ell\geq1$ we have $\Lambda_\ell\subset\Lambda_{\ell+1}$, and
\begin{equation}\label{E:Lambdaell}
\Lambda=\bigcup_{\ell\geq1} \Lambda_\ell.
\end{equation}
It is not difficult to see that each $\Lambda_\ell$ is a uniformly hyperbolic set (but not necessarily $f$-invariant), see \cite[Section 4.3, Exercise 4.8]{BarreiraPesin}. For any fixed $\ell\geq1$ the stable and unstable subspaces $E^s(x)$ and $E^u(x)$ depend continuously on $x\in \Lambda_\ell$ and rectangles $\mathcal{R}=\mathcal{R}(x,\delta)$ can be be defined as in (\ref{E:rectanglePH}) with $\Lambda_\ell$ in place of $\Lambda$, finitely many rectangles will cover $\Lambda_\ell$.

\begin{remark}
SRB measures for topological attractors $\Lambda$ with nonuniformly hyperbolic structure can be constructed using \emph{effective hyperbolicity} \cite[Section 1.2]{CDP16}: If there are measurable invariant cone families and the set of effectively hyperbolic points has positive volume, then there is an SRB measure on $\Lambda$. See \cite[Section 1.2 and Theorem A]{CDP16} or \cite[Section 7, Theorem 7.2]{CLP17}.
\end{remark}

\begin{theorem}\label{T:nuha}
Let $M$ and $f$ be as in Section \ref{S:hyperbolic}, $\Lambda$ a topological attractor for $f$ with nonuniformly hyperbolic structure and endowed with an SRB measure $\mu$. Then the statements in Theorem \ref{T:pha} (i), (ii) and (iii) hold with $\Lambda$ as above.
\end{theorem}

\begin{proof}
By Theorem \ref{T:SMTnonu} a slightly weaker version of Assumption \ref{A:A1} is satisfied in the sense that we have a partition as stated there for a subset of $\Lambda$ of full $\mu$-measure. This still allows to define leafwise gradients as in (\ref{E:classgrad}) for $\mu$-a.e. $x\in \Lambda$, what is sufficient to define the quadratic forms (\ref{E:defquadform}).
Assumption \ref{A:A2} follows from (\ref{E:Lambdaell}) and the continuity of  $x\mapsto T_xV^u(x)$ on the sets $\Lambda_\ell$.
Assumption \ref{A:A3} holds with $A_\ell=\Lambda_\ell$. Lemma \ref{L:closable} and Theorem \ref{T:closable} remain applicable without changes.
\end{proof}

We recall a well-known nonuniformly hyperbolic example.

\begin{examples}
The classical H\'enon map $f_{a,b}:\mathbb{R}^2\rightarrow \mathbb{R}^2$ is defined by $f_{a,b}(x,y)=(1-ax^2+y,bx)$, where $a=1.4$ and $b=0.3$. The associated attractor is known as \emph{H\'enon attractor}, Figure 4. See \cite{Henon} and \cite[p. 187]{BarreiraPesin2}. The existence of SRB measure for certain H\'enon attractors was established in \cite{BeY93} based on the earlier work \cite{BeCa91}. See also \cite[Theorem 13.3.9]{BarreiraPesin2}.
\end{examples}

\begin{figure}
\centering
\includegraphics[height=5cm]{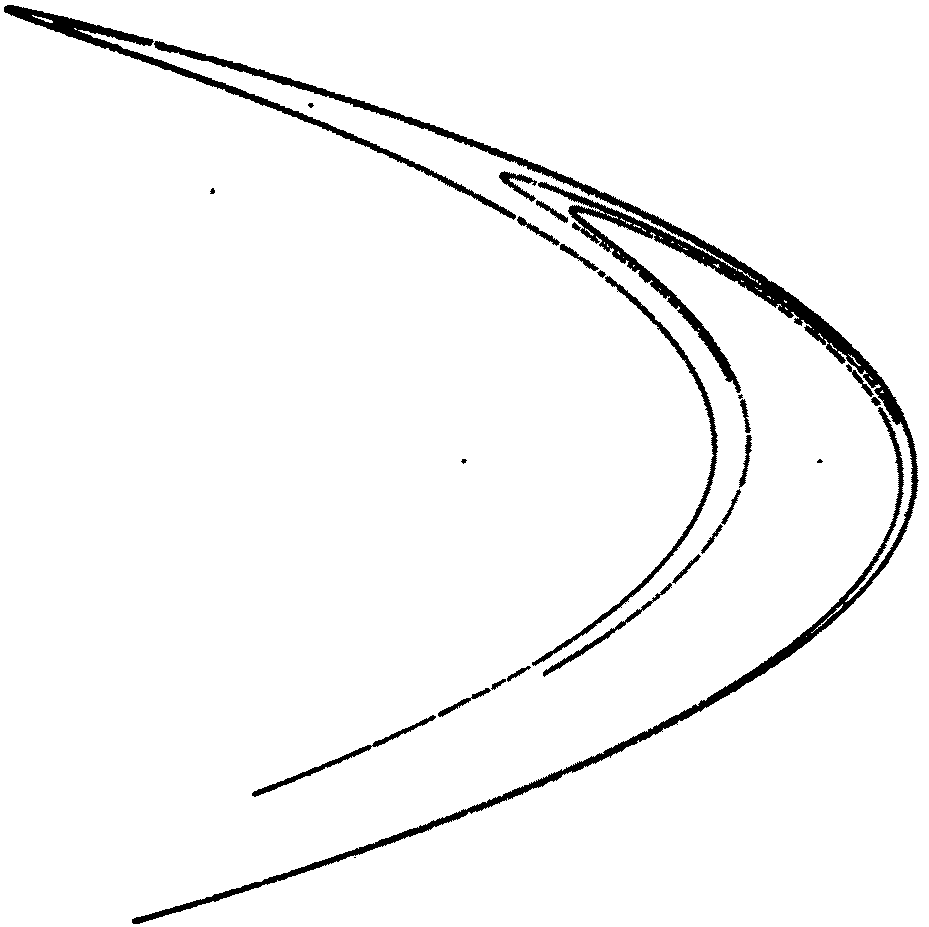}
\caption{The classical H\'enon attractor \cite{Henon}.}
\end{figure}

\appendix

\section{Disintegration and Rokhlin's theorem}\label{S:Rokhlin}

We recall a classical theorem due to Rokhlin \cite{Rokhlin}, further details can be found in \cite[Chapter 5]{Viana16}.

Let $(\mathcal{X}, \mathcal{A},\mu)$ be a probability space and $\mathcal{P}$ a partition of $\mathcal{X}$. A subset $\mathcal{Q}$ of $\mathcal{P}$ is called \emph{measurable} if the preimage of $\mathcal{Q}$ under the projection map $\pi: \mathcal{X}\rightarrow \mathcal{P}$ taking each point $x\in \mathcal{X}$ to the element $P(x)\in \mathcal{P}$ containing $x$ is measurable. The quotient probability measure $\hat{\mu}$ on $\mathcal{P}$ is defined as the pushforward of $\mu$ under $\pi$.  

A \textit{disintegration} of $\mu$ with respect to $\mathcal{P}$ is a family $\lbrace \mu_P\rbrace_{P\in \mathcal{P}}$ of probability measures on $\mathcal{X}$, called the \textit{conditional measures} (or \emph{conditional probabilities}), such that

\begin{enumerate}
\item[(i)] $\mu_P(P)=1$ for $\hat{\mu}$-a.e. $P\in \mathcal{P}$,
\item[(ii)] for any $A\in\mathcal{A}$ the function $P \mapsto \mu_P(A)$ is measurable,
\item[(iii)] we have 
\begin{equation}\label{E:Rokhlindisintegration}
\mu(A)=\int \mu_P(A)\, d\hat{\mu}(P) \quad \text{for any $A\in \mathcal{A}$.}
\end{equation}
\end{enumerate}
A partition $\mathcal{P}$ of $\mathcal{X}$ is called \textit{measurable} if there exists a sequence $(\mathcal{P}_n)_n$ of countable partitions $\mathcal{P}_n$ such that for $\mu$-a.e. $x\in \mathcal{X}$, $P_{n+1}(x)\subset P_{n}(x)$ for every $n\in \mathbb{N}$, and $P(x)=\bigcap_{n\in \mathbb{N}}P_{n}(x)$.

\begin{remark}
The partition into orbits of an ergodic measure theoretic dynamical system is not measurable, unless one orbit has full measure, 
\cite[Example 5.1.10]{Viana16}. The same happens for the partition of a uniformly hyperbolic attractor into global unstable manifolds, endowed with an ergodic probability measure, see for instance \cite[p. 513]{LedrappierYoung1}.
\end{remark}

We quote the following from \cite[Proposition 5.1.7 and Theorem 5.1.11]{Viana16}.

\begin{theorem}\label{T:Rokhlin} 
Let $\mu$ be a Borel probability measure on a compact metric space $\mathcal{X}$ and $\mathcal{P}$ a measurable partition of $\mathcal{X}$. There is a disintegration of $\mu$ with respect to $\mathcal{P}$. If $\lbrace \mu_P\rbrace_{P\in \mathcal{P}}$ and $\lbrace \mu_P'\rbrace_{P\in \mathcal{P}}$ are two disintegrations of $\mu$ with respect to $\mathcal{P}$ then 
$\mu_P=\mu_P'$ for $\hat{\mu}$-a.e. $P\in\mathcal{P}$.
\end{theorem}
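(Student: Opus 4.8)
The plan is to run the classical martingale construction. Since $\mathcal{P}$ is a measurable partition, fix a sequence $(\mathcal{P}_n)_n$ of countable partitions witnessing this, so that $P_{n+1}(x)\subset P_n(x)$ for every $n$ and $P(x)=\bigcap_{n\in\mathbb{N}}P_n(x)$ for $\mu$-a.e.\ $x$, and let $\mathcal{A}_n$ be the $\sigma$-algebra generated by $\mathcal{P}_n$. Because $\mathcal{X}$ is a compact metric space, $C(\mathcal{X})$ is separable; fix a countable $\mathbb{Q}$-subalgebra $\mathcal{C}\subset C(\mathcal{X})$ dense in $C(\mathcal{X})$. For $\mu$-a.e.\ $x$ every atom $P_n(x)$ has positive measure, and for each $\varphi\in\mathcal{C}$ the function $x\mapsto \mu(P_n(x))^{-1}\int_{P_n(x)}\varphi\,d\mu$ is a version of $\mathbb{E}[\varphi\mid \mathcal{A}_n]$. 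By the martingale convergence theorem for the increasing filtration $(\mathcal{A}_n)_n$ this converges $\mu$-a.e.\ and in $L^1(\mu)$. Intersecting the countably many exceptional sets, one gets a Borel set $\mathcal{X}_0$ with $\mu(\mathcal{X}_0)=1$ on which $\Lambda_x(\varphi):=\lim_n \mu(P_n(x))^{-1}\int_{P_n(x)}\varphi\,d\mu$ exists simultaneously for all $\varphi\in\mathcal{C}$.

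Next I would observe that on $\mathcal{X}_0$ the functional $\Lambda_x$ is linear, positive, and of norm $1$ on $\mathcal{C}$, hence extends uniquely to a positive normalized linear functional on $C(\mathcal{X})$; the Riesz representation theorem then supplies a Borel probability measure $\mu_{P(x)}$ with $\int\varphi\,d\mu_{P(x)}=\Lambda_x(\varphi)$. Since $\Lambda_x$ depends on $x$ only through the (countably many) atoms $P_n(x)$, the family $\{\mu_{P(x)}\}$ descends, off a $\hat{\mu}$-null subset of $\mathcal{P}$, to a family indexed by $\mathcal{P}$. Property (ii) follows because $P\mapsto \mu_P(A)$ is, for $A$ Borel, an a.e.\ pointwise limit of $\mathcal{A}_n$-measurable functions, first for $A$ replaced by $\varphi\in\mathcal{C}$ and then for general bounded Borel $\varphi$ by a monotone-class argument. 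Property (iii), i.e.\ formula (\ref{E:Rokhlindisintegration}), is obtained by integrating $\Lambda_x(\varphi)$ against $\mu$: the tower property gives $\int_{\mathcal{X}}\Lambda_x(\varphi)\,d\mu(x)=\lim_n\int_{\mathcal{X}}\mathbb{E}[\varphi\mid\mathcal{A}_n]\,d\mu=\int_{\mathcal{X}}\varphi\,d\mu$ for $\varphi\in\mathcal{C}$, and one passes from $\mathcal{C}$ to indicators of Borel sets by another monotone-class argument.

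There remains property (i), that $\mu_{P(x)}$ is carried by $P(x)$, which I expect to be the delicate point. Each atom $Q\in\mathcal{P}_n$ is Borel, and for $m\geq n$ and $x\in Q$ one has $\mu(P_m(x))^{-1}\int_{P_m(x)}\mathbf{1}_Q\,d\mu=1$; hence $\mu_{P(x)}(P_n(x))=1$ for $\mu$-a.e.\ $x$, the exceptional set being a countable union over the atoms of $\mathcal{P}_n$ of null sets. Intersecting over $n$ yields $\mu_{P(x)}\big(\bigcap_n P_n(x)\big)=\mu_{P(x)}(P(x))=1$ for $\mu$-a.e.\ $x$, i.e.\ for $\hat{\mu}$-a.e.\ $P$. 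For uniqueness, if $\{\mu_P\}$ and $\{\mu_P'\}$ are two disintegrations, then for each $A\in\mathcal{C}$ and each measurable $\mathcal{Q}\subset\mathcal{P}$ both $\int_{\mathcal{Q}}\mu_P(A)\,d\hat{\mu}(P)$ and $\int_{\mathcal{Q}}\mu_P'(A)\,d\hat{\mu}(P)$ equal $\mu(A\cap\pi^{-1}(\mathcal{Q}))$ by (iii), so $\mu_P(A)=\mu_P'(A)$ for $\hat{\mu}$-a.e.\ $P$; since $\mathcal{C}$ is countable and generates the Borel $\sigma$-algebra, a monotone-class argument gives $\mu_P=\mu_P'$ for $\hat{\mu}$-a.e.\ $P$.

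The main obstacle is exactly step (i): it is here that measurability of $\mathcal{P}$ is indispensable, since it is what lets one write $P(x)$ as a countable decreasing intersection of measurable atoms and thus control, a.e.\ and level by level, where the limiting measures sit. Without the refining sequence $(\mathcal{P}_n)_n$ this bookkeeping collapses, consistently with the fact that the orbit partition of an ergodic system (or the partition into global unstable manifolds) fails to be measurable. The martingale convergence and Riesz representation steps, by contrast, are routine once $\mathcal{C}$ and $(\mathcal{P}_n)_n$ have been fixed.
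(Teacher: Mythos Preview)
The paper does not prove this theorem: the sentence immediately preceding the statement says ``We quote the following from \cite[Proposition 5.1.7 and Theorem 5.1.11]{Viana16}'', and no proof is supplied. So there is no ``paper's own proof'' to compare against; the result is imported from Viana--Oliveira. Your sketch is precisely the classical martingale argument that underlies the proof in that reference, so in that sense you have reconstructed the intended route.

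Two small points on the sketch itself. In the uniqueness paragraph you write ``for each $A\in\mathcal{C}$'', but $\mathcal{C}$ was defined as a countable $\mathbb{Q}$-subalgebra of $C(\mathcal{X})$, so its elements are functions, not sets; you mean either ``for each $\varphi\in\mathcal{C}$'' or you should instead fix a countable generating family of Borel sets. Also, in your argument for (i) you compute $\mu_{P(x)}(Q)$ for an atom $Q\in\mathcal{P}_n$ via the explicit averages $\mu(P_m(x))^{-1}\int_{P_m(x)}\mathbf{1}_Q\,d\mu$, but the measure $\mu_{P(x)}$ was defined through Riesz representation from the limits on $\mathcal{C}\subset C(\mathcal{X})$, and $\mathbf{1}_Q$ is not in $\mathcal{C}$. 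One first needs the monotone-class step identifying $\mu_{P(x)}(A)$ with (a version of) $\mathbb{E}[\mathbf{1}_A\mid\sigma(\bigcup_n\mathcal{A}_n)](x)$ for Borel $A$; once that is in place your computation for (i) goes through. These are ordering/notation issues rather than genuine gaps.
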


\section{The geometric construction of Gibbs $u$-measures}\label{S:SRB}

For the convenience of the reader we recall the well-known geometric construction, \cite{CLP17}, of Gibbs $u$-measures on partially hyperbolic attractors, \cite[Propositions 2 and 3 and Theorem 4]{PesinSinai}. Suitable modifications of the arguments also cover the case of attractors with singularities, see \cite{Pesin92}.

We assume that $\Lambda$ is a hyperbolic attractor as in Section \ref{S:hyperbolic}; we use the same notation as there.  For any $z\in \Lambda$ we write $\mathcal{J}^u_z f$ to denote the Jacobian of $f$ along $W^u(z)$ (the modulus of the determinant of $d_zf\vert_{T_zW^u}:T_zW^u\to T_{f(z)}W^u$ with respect to the Riemannian metric). For any $n\geq 1$, any $x\in \Lambda$ and any $y\in W^u(x)$ we set $\varrho_n(x,y):=\prod_{j=1}^{n}\mathcal{J}^u_{f^{-j}(x)} f \big[\mathcal{J}^u_{f^{-j}(y)} f\big]^{-1}$.
There exists a constant $\kappa_0\geq 1$ such that for all $n\geq 1$ and for all $x\in \Lambda$ and $y\in V^u (x)$, we have 
\begin{equation}\label{E:boundeddistortion}
\kappa^{-1}_0\leq \varrho_n(x,y) \leq \kappa_0.
\end{equation}
This follows using a traditional argument, \cite[Lemma 3.3]{BonattiViana}: Since $f$ is $C^{1+\alpha}$, the function $z\mapsto \mathcal{J}^u_z f$ is $\alpha$-H\"older. Taking into account uniform hyperbolicity, 
\begin{equation}\label{E:key}
|\log \mathcal{J}^u_{f^{-j}(x)} f-\log \mathcal{J}^u_{f^{-j}(y)} f|\leq L\:d(f^{-j}(x),f^{-j}(y))^\alpha\leq LC^\alpha \lambda^{j \alpha} d(x,y)^\alpha
\end{equation}
with $L,C>0$ and $\lambda\in (0,1)$ as in Theorem \ref{T:SMT2}, and summing over $j\in \mathbb{N}$, we see that $\varrho_n(x,y)\leq \exp(LC^\alpha \lambda^\alpha d(x,y)^\alpha /(1-\lambda^\alpha))$, what implies (\ref{E:boundeddistortion}). In a similar way it follows that $|\varrho_n(x,y)-1|\leq C'\:d(x,y)^\alpha$ with $C'>0$ depending only on $\alpha$, $L$, $C$ and $\lambda$. Combined with (\ref{E:boundeddistortion}) this shows that 
\begin{equation}\label{E:Hoelderest}
|\varrho_n(x,y)-\varrho_n(x,z)|\leq C'K_0\:d(y,z)^\alpha,\quad \text{ for all $y,z\in V^u(x)$, $y \not= z$.}
\end{equation}
Applying Arzel{\`a}-Ascoli on a compact subset $K(x)$ of $V^u(x)$ containing $x$, we then see that
$\varrho(x,y):=\lim_n \varrho_n(x,y)$ with convergence uniform in $x\in \Lambda$ and $y\in K(x)$, \cite[Proposition 2]{PesinSinai}. Clearly $\varrho$ admits the same bounds as $\varrho_n$ in (\ref{E:boundeddistortion}), and using (\ref{E:Hoelderest}) it follows that for any $x$ the function $\varrho(x,\cdot) $ is $\alpha$-H\"older on $K(x)$. 

Now let $x\in \Lambda$ be fixed. Set $c_0:=1$ and $c_n:=\big[\prod_{k=0}^{n-1} \mathcal{J}^u_{f^k(x)}f\big]^{-1}$ and consider the measures on $f^n(V^u(x))$ defined by $\nu_n(dy):=c_n \varrho(f^n(x),y)\:m_{f^n(V^u(x))}(dy)$. Using the change of variable formula and balancing all cancellations, one can see that $\nu_n=f_\ast^n\nu_0$ for any $n$, \cite[Proposition 3 and Section 2.6]{PesinSinai}. Similarly as in the Krylov-Bogoliubov theorem one can now define averages 
\[\mu_n:=\frac{1}{n}\sum_{k=0}^{n-1} \nu_k,\]
and since $\nu_0$ is a finite measure, Prohorov's theorem implies that $(\mu_n)_n$ has a weak subsequential limit $\mu$, which obviously is $f$-invariant. If now $\mathcal{R}\subset \Lambda$ is a rectangle as in (\ref{E:rectanglePH}), then the weak convergence, together with disintegration on $\mathcal{R}$ and a uniform convergence argument for the densities of the conditional measures show that for $\hat{\mu}_{\mathcal{P}_\mathcal{R}}$-a.e. $V^u=V^u(x)$ in $\mathcal{P}_\mathcal{R}$ we have 
\[\mu_{V^u(x)}(dy)=\varrho(x)^{-1} \varrho(x,y) \:m_{V^u(x)}(dy)\]
with a renormalization constant $\varrho(x):=\int_{V^u(x)}\varrho(x,y)m_{V^u(x)}(dy)$,
\cite[Theorem 4, Lemma 13 and Section 2.7]{PesinSinai}. In particular, $\mu$ is a Gibbs $u$-measure with uniformly bounded H\"older continuous densities $\varrho_{V^u(x)}=\varrho(x)^{-1} \varrho(x,\cdot)$.

\section{Dirichlet integrals on weighted manifolds}\label{S:manifolds}

We collect some facts on Dirichlet integrals on weighted manifolds of low regularity and with uniformly bounded densities.

Let $M$ be a $C^1$-Riemannian manifold, let $m_M$ denote the Riemannian volume and $\nabla_M$ the gradient operator on $M$. 
Suppose further that $\varrho_M$ is a bounded and strictly positive continuous function on $M$ and consider the measure $\mu_M=\varrho_M\cdot m_M$
with density $\varrho_M$. The Riemannian volume itself corresponds to the choice $\varrho_M\equiv\mathbf{1}$.

Under these assumptions one can still introduce a kind of classical divergence operator. Consider the space 
\[\varrho_M^{-1} C_c^1(M, TM):=\left\lbrace \varrho_M^{-1} w: w\in C_c^1(M, TM)\right\rbrace,\]
where $\varrho_M^{-1}$ denotes the reciprocal of $\varrho_M$, and set 
\[\diverg_{\mu_M} v:=\varrho_M^{-1} \diverg_M (\varrho_M v),\quad v \in \varrho_M^{-1} C_c^1(M, TM),\]
where $\diverg_M$ is the usual divergence on $C^1$-vector fields. The following version of the divergence theorem is immediate from a $C^1$-version of the classical one, \cite[Theorem 3.5]{Grigoryanbook}.

\begin{corollary}\label{C:divergencetheorem}
Let $M$ and $\mu_M$ be as before. Then for any $\varphi\in C^1(M)$ and $v\in \varrho_M^{-1} C_c^1(M, TM)$ we have
\begin{equation}\label{E:divergencetheorem}
\int_M (\diverg_{\mu_M} v)\varphi\:d\mu_M=-\int_M\left\langle v,\nabla_M \varphi\right\rangle_{TM}d\mu_M.
\end{equation}
\end{corollary}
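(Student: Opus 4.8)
The plan is to deduce (\ref{E:divergencetheorem}) from the unweighted $C^1$-divergence theorem by absorbing the weight. By the very definition of the space $\varrho_M^{-1}C_c^1(M,TM)$, every admissible $v$ is of the form $v=\varrho_M^{-1}w$ with $w\in C_c^1(M,TM)$, so that $\varrho_Mv=w$ is a genuine compactly supported $C^1$-vector field, even though $\varrho_M$ itself is only Borel. Hence $\diverg_M(\varrho_Mv)=\diverg_Mw$ is the classical divergence of a $C^1$-field, and by (\ref{E:classicaldivergence}) we have $\diverg_{\mu_M}v=\varrho_M^{-1}\diverg_Mw$, which is a continuous compactly supported function.

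First I would substitute these expressions into the two sides of (\ref{E:divergencetheorem}) and use $\mu_M=\varrho_M\cdot m_M$ from (\ref{E:weightedmeasure}). On the left, the factor $\varrho_M$ coming from the measure cancels the factor $\varrho_M^{-1}$ in $\diverg_{\mu_M}v$, leaving $\int_M(\diverg_Mw)\,\varphi\,dm_M$; on the right one gets likewise $\int_M\langle w,\nabla_M\varphi\rangle_{TM}\,dm_M$. This cancellation is really the whole point: after it no derivative of $\varrho_M$ occurs anywhere, which is exactly why the low regularity of $\varrho_M$ is harmless. Thus (\ref{E:divergencetheorem}) is reduced to the standard integration-by-parts identity relating $\int_M(\diverg_Mw)\varphi\,dm_M$ and $\int_M\langle w,\nabla_M\varphi\rangle_{TM}\,dm_M$ for the compactly supported $C^1$-vector field $\varphi w$ (with the sign convention of Proposition \ref{P:Dirichletintegral}).

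To conclude, I would invoke the $C^1$-version of the classical divergence theorem, $\int_M\diverg_M(\varphi w)\,dm_M=0$ (valid since $\varphi w\in C_c^1(M,TM)$), together with the Leibniz rule $\diverg_M(\varphi w)=\varphi\,\diverg_Mw+\langle\nabla_M\varphi,w\rangle_{TM}$; both hold at $C^1$-regularity by the usual local-chart computation, the very one already used in the proof of Proposition \ref{P:Dirichletintegral} (cf. \cite[Theorem 3.5]{Grigoryanbook}). Combining these two facts yields (\ref{E:divergencetheorem}). I do not expect a genuine obstacle here: the only point to check is that $\diverg_Mw$, the product rule, and the vanishing of $\int_M\diverg_M(\varphi w)\,dm_M$ remain valid when $(M,\mathbf{g})$, $\varphi$ and $w$ are merely $C^1$, and this is precisely the regularity already handled in Proposition \ref{P:Dirichletintegral}. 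It is worth recording explicitly that $\varrho_M$ being only Borel causes no trouble precisely because it is never differentiated: it enters only through the already-$C^1$ product $\varrho_Mv=w$ and through the measure, and these two occurrences cancel against each other.
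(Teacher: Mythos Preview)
Your proof is correct and is exactly the approach the paper takes: the text preceding the corollary says it ``is immediate from a $C^1$-version of the classical one, \cite[Theorem 3.5]{Grigoryanbook}'', and your argument simply spells out what this immediacy means---writing $v=\varrho_M^{-1}w$ with $w\in C_c^1(M,TM)$, cancelling $\varrho_M$ against $\varrho_M^{-1}$ on both sides, and invoking the unweighted $C^1$-divergence theorem for $\varphi w$.
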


We make $\varrho_M^{-1} C_c^1(M, TM)$ a locally convex space by saying that a sequence $(v_j)_j \subset \varrho_M^{-1} C_c^1(M, TM)$ converges to some $v\in \varrho_M^{-1} C_c^1(M, TM)$ if all $v_j$ have support in a compact set $K\subset M$ and 
\[\lim_{j\to \infty}\left\|\varrho_M(v_j- v)\right\|_{C^1(M,TM)}=0,\]
where $\left\|w\right\|_{C^1(M,TM)}:=\sup_{x\in M}\left\|w(x)\right\|_{T_xM}$, $w\in C^1(M,TM)$. 

We write $(\varrho_M^{-1} C_c^1(M,TM))^\ast$ for the topological dual of the space $\varrho_M^{-1} C_c^1(M,TM)$. A sequence $(g_j)_j$ in this dual converges to an element $g$ if $\lim_{j\to \infty} g_j(v)=g(v)$ for all $v\in \varrho_M^{-1} C_c^1(M,TM)$.

Since for any $g\in L^2(M,TM,\mu_M)$ and $v\in \varrho_M^{-1} C_c^1(M,TM)$ supported in a compact set $K$ we have 
\begin{equation}\label{E:L2aredist}
|\int_M \left\langle v,g\right\rangle_{TM}\:d\mu_M|\leq \left\|\varrho_M v\right\|_{C^1(M,TM)}\:\sup_{x\in K}|\varrho_M(x)|^{-1}\:(\mu_M(K))^{1/2}\left\|g\right\|_{L^2(M,TM,\mu_M)},
\end{equation}
it follows that by
\[g(v):=\int_M \left\langle v,g\right\rangle_{TM} d\mu_M\]
we can assign an element of $(\varrho_M^{-1} C_c^1(M,TM))^\ast$ to $g$. By density this
assignment is injective and consequently it is justified to write again $g$ for this element of the dual space.
In this sense we have  
\[L^2(M,TM,\mu_M)\subset (\varrho_M^{-1} C_c^1(M,TM))^\ast.\] 
By (\ref{E:L2aredist}) convergence in $L^2(M,TM,\mu_M)$ implies convergence in $(\varrho_M^{-1} C_c^1(M,TM))^\ast$.

Given $\varphi\in L^1_{\loc}(M,\mu_M)$ we define its 'distributional' gradient $\nabla_M\varphi$ as an element of $(\varrho_M^{-1} C_c^1(M,TM))^\ast$ by 
\begin{equation}\label{E:distgrad}
\nabla_M\varphi(v):=-\int_M \varphi \diverg_{\mu_M}v\:d\mu_M,\quad v\in \varrho_M^{-1} C_c^1(M,TM).
\end{equation}
By Corollary \ref{C:divergencetheorem} this distributional gradient becomes the usual gradient if $\varphi\in C^1(M)$.

We consider the Sobolev space of (classes of) square integrable functions with square integrable gradients,
\begin{equation}\label{E:Sobospace}
W^{1,2}(M,\mu_M)=\left\lbrace \varphi\in L^2(M,\mu_M):\ \int_M \left\|\nabla_M \varphi(x)\right\|_{T_x M}^2\mu_M(dx)<+\infty\right\rbrace
\end{equation}
and define the Dirichlet integral for (classes of) square integrable functions by
\begin{equation}\label{E:Dirichletintegral}
D^{(\mu_M)}(\varphi):=\begin{cases} \int_M \left\|\nabla_M \varphi(x)\right\|_{T_xM}^2 \mu_M(dx) &\ \text{if $\varphi \in  W^{1,2}(M,\mu_M)$},\\ +\infty &\ \text{if $\varphi\in L^2(M,\mu_M)\setminus W^{1,2}(M,\mu_M)$}.\end{cases}
\end{equation}
By polarization $D^{(\mu_M)}$ defines a nonnegative definite symmetric bilinear form on $W^{1,2}(M,\mu_M)$, and we endow 
$W^{1,2}(M,\mu_M)$ with the scalar product 
\[(\varphi,\psi)\mapsto \left\langle \varphi,\psi\right\rangle_{L^2(M,\mu_M)}+D^{(\mu_M)}(\varphi,\psi).\]
The following fact can now be seen as in the classical case, \cite[Lemma 4.3]{Grigoryanbook}.
\begin{proposition}\label{P:closed}
The space $W^{1,2}(M,\mu_M)$ is Hilbert, and $(D^{(\mu_M)},W^{1,2}(M,\mu_M))$ is a Dirichlet form on $L^2(M,\mu_M)$.
\end{proposition}
\begin{proof} Suppose that $(\varphi_j)_j$ is Cauchy in $W^{1,2}(M,\mu_M)$.
We have $\lim_{j\to \infty} \varphi_j=\varphi$ in $L^2(M,\mu_M)$; we also have $\lim_{j\to \infty} \nabla_M \varphi_j=g$ in $L^2(M,TM,\mu_M)$ and therefore in $(\varrho_M^{-1} C_c^1(M,TM))^\ast$. But (\ref{E:distgrad}) implies that $\lim_{j\to \infty} \nabla_M \varphi_j=\nabla_M \varphi$ in $(\varrho_M^{-1} C_c^1(M,TM))^\ast$, what implies that $g=\nabla_M \varphi$.
\end{proof}

\begin{remark}\label{R:saLaplacianM}\mbox{}
\begin{enumerate}
\item[(i)] The generator $(\mathcal{L}^{(\mu_M)},\mathcal{D}(\mathcal{L}^{(\mu_M)}))$ of $(D^{(\mu_M)},W^{1,2}(M,\mu_M))$ 
is a non-positive definite self-adjoint operator on $L^2(M,\mu_M)$ and may be viewed as a Laplacian on $M$. Informally, 
$\mathcal{L}^{(\mu_M)}\varphi=\mathcal{L}^{(m_M)}\varphi+\left\langle \varrho_M^{-1}\nabla_M \varrho_M,\nabla_M\varphi\right\rangle_{TM}$,
where $\mathcal{L}^{(m_M)}$ denotes the usual Laplacian. If $M$ is of class $C^2$ and $\varrho_M\equiv \mathbf{1}$ then $(\mathcal{L}^{(m_M)},\mathcal{D}(\mathcal{L}^{(m_M)}))$ is a self-adjoint extension of the classical Laplace-Beltrami operator on $C^2_c(M)$.
\item[(ii)] If $\mathring{W}^{1,2}(M,\mu_M)$ denoted the closure of $C_c^1(M)$ in $W^{1,2}(M,\mu_M)$, then the quadratic form $(D^{(\mu_M)},\mathring{W}^{1,2}(M,\mu_M))$ is a strongly local regular Dirichlet form on $L^2(M,\mu_M)$. In general $\mathring{W}^{1,2}(M,\mu_M)$ may be smaller than $W^{1,2}(M,\mu_M)$. The generator of this form is referred to as the \emph{Dirichlet Laplacian} on $M$. The symmetric Hunt diffusion process on $M$ it generates is a \emph{distorted Brownian motion}, see for instance \cite{AH-KS77, Fukushima81, Kaimanovich89, TrutnauShin}.
\end{enumerate}
\end{remark}

\section{Superposition of closable forms}\label{S:superpos}

The following is a special case of \cite[Theorem 1.2]{AR90}, see also \cite[Chapter V, Proposition 3.1.1]{BH91} or \cite[Section 3.1. (2$^\circ$)]{FOT94}. For details on measurable fields of Hilbert spaces and their direct integrals see  \cite[Part II, Chapter 1]{Dix} or \cite[Chapter IV, Section 8]{Tak}.
\begin{proposition}\label{P:superpos}
Let $(Y,\mathcal{Y},\hat{\mu})$ be a $\sigma$-finite measure space and let $(H_y,\left\langle\cdot,\cdot\right\rangle_{H_y})_{y\in Y}$ be a measurable field of Hilbert spaces $H_y$ on $Y$. Suppose that there are quadratic forms $(\mathcal{Q}^{(y)},C_y)$ on $H_y$, $y\in Y$, respectively, and 
that for $\hat{\mu}$-a.e. $y\in Y$ the space $C_y$ is dense in $H_y$. For any $y\in Y$ and $w\in H_y$ set 
\[Q^{(y)}(w):=\begin{cases} Q^{(y)}(w) & \ \text{if $w\in C_y$}\\ +\infty & \ \text{otherwise}\end{cases}\]
and consider the space
\[\mathcal{C}:=\big\lbrace v=(v_y)_{y\in Y}\in L^2(Y,(H_y)_{y\in Y},\hat{\mu}): \text{the map $y\mapsto \mathcal{Q}^{(y)}(v_y)$ is $\hat{\mu}$-integrable} \big\rbrace.\]
If for $\hat{\mu}$-a.e. $y\in Y$ the form 
$(\mathcal{Q}^{(y)},C_y)$ is closable (closed) on $H_y$, then the quadratic form $(\mathcal{Q},\mathcal{C})$, defined by 
$\mathcal{Q}(v)=\int_Y \mathcal{Q}^{(y)}(v_y)\hat{\mu}(dy)$, $v\in\mathcal{C}$,
is closable (closed) on $L^2(Y,(H_y)_{y\in Y},\hat{\mu})$.
\end{proposition}

\end{document}